\newtheorem{theorem}{Theorem}[section]
\newtheorem{lemma}[theorem]{Lemma}
\newtheorem{proposition}[theorem]{Proposition}
\newtheorem{remark}{Remark}[section]
\newcommand{\argmax}{\operatornamewithlimits{argmax}}
\newcommand{\Exp}{\operatornamewithlimits{Exp}}
\newcommand{\N}{\ensuremath{{\mathbb N}}}
\newcommand{\E}{\ensuremath{{\mathbf E}}}
\numberwithin{equation}{section}
\title{Resolvent-techniques for multiple exercise problems}
\author{S\"{o}ren Christensen}
\author{Jukka Lempa}
\subjclass[2000]{60J60, 60G40}
\keywords{optimal multiple stopping, stochastic impulse control, strong Markov process, L\'evy process, diffusion process, resolvent operator}
\address[S\"{o}ren Christensen]{Mathematical Institute, Christian-Albrechts-University in Kiel, Ludewig-Meyn-Str. 4, D -- 24098 Kiel, e-mail:\texttt{christensen@math.uni-kiel.de}}
\address[Jukka Lempa]{School of business, Faculty of Social Sciences, Oslo and Akershus University College, P.O. Box 4 St. Olavs plass, NO -- 0130 Oslo, e-mail: \texttt{jukka.lempa@hioa.no}}
\begin{document}

\begin{abstract}
We study optimal multiple stopping of strong Markov processes with random refraction periods. The refraction periods are assumed to be exponentially distributed with a common rate and independent of the underlying dynamics. Our main tool is using the resolvent operator. In the first part, we reduce infinite stopping problems to ordinary ones in a general strong Markov setting. This leads to explicit solutions for wide classes of such problems. Starting from this result, we analyze problems with finitely many exercise rights and explain solution methods for some classes of problems with underlying L\'evy and diffusion processes, where the optimal characteristics of the problems can be identified more explicitly. We illustrate the main results with explicit examples.
\end{abstract}

\maketitle

\section{Introduction}


In this paper, we investigate the following optimal multiple stopping problem. Assume that the agent follows the evolution of a continuous-time state variable $X$ and has the possibility to choose $N$ stopping times such that there are exponentially distributed refraction periods between the stopping times. At each stopping time, the agent gets a payoff contingent on the state of $X$ and her objective is to maximize the expected present value of the total payoff.



Optimal multiple stopping problems with deterministic refraction periods have been studied quite extensively over the recent years. In particular, theory of Snell envelopes is understood well in both discrete and continuous time, see \cite{Car, Touzi, Alexandrov_Hambly, BenderDual, BenderDual2, Meinshausen, Schoenmakers, Zeghal_Mnif}.  In the recent study \cite{CIJ}, the authors develop a general approach for optimal multiple stopping with random refraction periods. In this paper, the refraction periods are assumed to be almost surely finite, non-negative random variables. Furthermore, the exercise policies are formalized with respect to appropriately extended filtrations, see \cite{CIJ}, Section 2, for details. From the mathematical point of view, our study is concerned with a special case of \cite{CIJ}. However, by imposing the additional assumption of the distribution of the refraction periods, we can exploit efficiently the known connection between exponentially distributed random times and the resolvent semigroup in our study. This allows us to get a much more detailed picture on the problem. For instance, we can solve the problem explicitly in many cases.


One of the major drawbacks in most of the previously mentioned articles is that it is hard to find examples that allow for an explicit solution, even in the case of simple one-dimensional underlying diffusion processes. Indeed, when studying optimal multiple stopping with deterministic refraction periods, the main obstacle is how to determine the value of the remaining stopping opportunities, the so-called \emph{continuation value}. In our framework with exponentially distributed refraction periods, we can tackle this using the theory of resolvent semigroups. From applications point of view, it is also interesting to study multiple stopping with random refraction periods. As we already mentioned, the majority of the literature is concerned with deterministic refraction periods. If we consider applications in real investment problems, it can very well be that the waiting times between exercises of a multi-strike real option are random. As an example in the real option spirit, consider a firm facing the timing problem of multiple identical investment projects. The firm has the capability to execute only one project at a time and it takes a period of time to complete the project. Here, the refraction period has the interpretation as \emph{time to build}, see, e.g., \cite{Majd_Pindyck}. Depending on nature of the project, it can be very difficult to tell in advance how long it takes to complete the project and therefore it can be reasonable to include uncertain times to build into the investment problem. Our study gives a flexible yet tractable model to study problems of this form.



This paper contributes to the theory of optimal multiple stopping in the following ways. 
We present a thorough analysis of the case of exponential waiting times which does not appear in the literature before. We point out an interesting connection on the multiple stopping problem to a class of stochastic impulse control problems. We give separate account of both the case with infinite and finite number of stopping times, we call these problems \emph{infinite} and \emph{finite} stopping problems, respectively. The infinite stopping problem, which is also interesting in its own right, gives an important point of reference to the finite stopping problem as a natural limiting case. Our main results are presented for general underlying strong Markov dynamics. The general results are complemented with a number of more specific results for underlying L\'evy and diffusion dynamics. The results are also illustrated with several explicit examples.

The reminder of the paper is organized as follows. In Section \ref{sec:setting} we formalize the optimal multiple stopping problem, whereas in Section \ref{sec:infinite} we first concentrate on the infinite multiple stopping problem, where the issuer has infinitely many exercise rights. Surprisingly, it turns out that these problems can be reduced to ordinary optimal stopping problems and therefore can be solved explicitly in many cases of interest. Some of these examples are given in Section \ref{sec:ex_infinite}, including multidimensional underlying processes and processes with jumps. In Section \ref{sec:mult} we study some of the general properties of stopping problems with finitely many exercise rights. Section \ref{sec:closed_form} is devoted to the analysis of some specific classes of the problems with underlying diffusion dynamics and L\'evy processes. The study is wrapped up with an explicit example of the finite stopping problem in Section \ref{sec:call}.

\section{The Multiple Stopping Problem}\label{sec:setting}

We start by laying down the probabilistic foundation for the optimal multiple stopping problem. Let $(\Omega,\mathcal{F},\mathbb{F},\mathbf{P})$ be a complete filtered probability space satisfying the usual conditions, where $\mathbb{F}=\{\mathcal{F}_t\}_{t\geq0}$, see \cite{BS}, p. 2. We assume that the underlying $X$ is a strong Markov process defined on $(\Omega,\mathcal{F},\mathbb{F},\mathbf{P})$ and taking values in $E\subseteq\mathbf{R}^d$ for some $d\geq1$ with the initial state $x\in E$. 
As usual, we augment the state space $E$ with a topologically isolated element $\Delta$ if the process $X$ is non-conservative. Then the process $X$ can be made conservative on the augmented state space $E^{\Delta}:=E\cup\{\Delta\}$, see \cite{Rogers_Williams_I}. In what follows, we drop the superscript $\Delta$ from the notation. By convention, we augment the definition of functions $g$ on $E$ with $g(\Delta)=0$. 

Denote as $\mathbf{P}_x$ the probability measure $\mathbf{P}$ conditioned on the initial state $x$ and as $\mathbf{E}_x$ the expectation with respect to $\mathbf{P}_x$. The process $X$ is assumed to evolve under $\mathbf{P}_x$ and the sample paths are assumed to be right-continuous and left-continuous over stopping times meaning the following: if the sequence of stopping times $\tau_n\uparrow \tau$, then $X_{\tau_n}\rightarrow X_\tau$ $\mathbf{P}_x$-almost surely as $n\rightarrow\infty$. Furthermore, we assume that the underlyings probability space is rich enough to carry a time-shift operator $\theta_\cdot$ for the strong Markov process. There is a well-established theory of standard optimal stopping for this class of processes, see \cite{ps}.

For $r>0$, we denote by $L_1^r$ the class of real valued measurable functions $f$ on $E$ satisfying the integrability condition $\mathbf{E}_x\left\{\int_0^\infty e^{-rt} \left|f(X_t)\right|dt \right\}<\infty$ for all $x\in E$. For a function $f\in L_1^r$, the \emph{resolvent} $R_rf:E\rightarrow\mathbf{R}$ is defined as
\[ (R_rf)(x)=\mathbf{E}_x \left\{\int_0^\infty e^{-rs} f(X_s) ds \right\}, \]
for all $x \in E$. It is well known that the family $(R_\lambda)_{\lambda\geq0}$ is a strongly continuous contraction resolvent and that it has the following connection to exponentially distributed random times: if $U\sim \Exp(\lambda)$ and independent of $X$, then $\lambda(R_{r+\lambda}g)(x)=\mathbf{E}_x[e^{-rU}g(X_U)]$ whenever $g\in L_1^r$, see \cite{Rogers_Williams_I}.

As was explained in the introduction, we consider an optimal multiple stopping problem with underlying strong Markov process $X$ subject to exponentially distributed refraction periods in between the stopping times. To make a precise statement, let $U,U_1,\dots,U_N$ be IID with $U\sim \Exp(\lambda)$ and independent of $X$. For brevity, denote the $N$-tuple $(U_1,\dots,U_N)$ as $\bar{U}$. Furthermore, denote as $\bar{\tau}=(\tau_1,\dots,\tau_N)$ an $N$-tuple of random times in the case  the case $N\in\N$ finite. We first concentrate on the case $N=\infty$, where we consider sequences $\bar{\tau}=(\tau_n)_{n\in\N},\;\bar{U}=(U_n)_{n\in\N},$ instead of tuples. A priori, the random times $U_i$ are not $\mathbb{F}$-stopping times, so we need to consider an appropriately augmented filtration to formalize the multiple stopping problem. To this end, we use the approach used in \cite{CIJ} which we review here shortly for the readers convenience. Define the augmented filtration $\mathbb{F}^{\tau_1+U_1}$ as the smallest right-continuous filtration such that the random time $\tau_1+U_1$ is a stopping time. Furthermore, define the remaining augmentations $\mathbb{F}^{\tau_1+U_1,\dots,\tau_i+U_i}$, $i=2,\dots,N$, recursively as
\[ \mathbb{F}^{\tau_1+U_1,\dots,\tau_i+U_i} = \left(\mathbb{F}^{\tau_1+U_1,\dots,\tau_{i-1}+U_{i-1}}\right)^{\tau_i+U_i}.  \]
This filtration carries the information on the occurrences of the chosen stopping times $\tau_i$ and the random times when the refraction period has elapsed after a given stopping time, i.e., the times $\tau_i+U_i$. Now, for each $\mathbb{F}$-stopping time $\eta$ and for $n\leq N$, define
\begin{displaymath}
\mathcal{S}^n_\eta(\hat{U},\mathbb{F})=\left\{
\overline{\tau} \ : \vphantom{\begin{split} &\tau_1\text{ is a $\mathbb{F}$-stopping time with }\eta\leq\tau_1,\\&\tau_i \text{ is a $\mathbb{F}^{\tau_1+U_1+\dots,\tau_{i-1}+U_{i-1}}$-stopping time } \\& \text{and } \tau{i-1}+U_{i-1}\leq \tau_i, \ i=2,\dots,n \end{split}} \right.
\left.\begin{split} &\tau_1\text{ is a $\mathbb{F}$-stopping time with }\eta\leq\tau_1,\\&\tau_i \text{ is a $\mathbb{F}^{\tau_1+U_1+\dots,\tau_{i-1}+U_{i-1}}$-stopping time } \\& \text{and } \tau_{i-1}+U_{i-1}\leq \tau_i, \ i=2,\dots,n \end{split} \right\},
\end{displaymath}
where $\hat{U}=(U_1\dots,U_{n-1})$. Using the set $\mathcal{S}$, we can formalize the multiple stopping problem under consideration as follows
\begin{equation}\label{Optimal Multiple Stopping}
V^N(x)=V^N_\lambda(x)=\sup_{\bar{\tau}\in \mathcal{S}^N_0(\hat{U},\mathbb{F})} \mathbf{E}_x\left\{ \sum_{i=1}^N e^{-r\tau_i} g(X_{\tau_i}) \mathbf{1}_{\{\tau_i<\infty\}} \right\}.
\end{equation}
Here, $r>0$ is the constant rate of discounting and $g$ has some regularity properties to be specified below.

\subsection{Connection to impulse control}

Before solving the multiple stopping problem, we want to point out the connection of the multiple and infinite optimal stopping problem to impulse control problems. We do not use the theory of impulse control in the following, but this point of view sheds light on the nature of the problem. To this end, we introduce a new Markov process $\hat X$ on the new state space
\[\hat E=E\cup E^\partial,\]
where for each $x\in E$ we introduce a new isolated point $\partial_x$ and write $E^\partial=\{\partial_x:x\in E\}$. Started on $E$ the process $\hat X$ behaves as the original process $X$. Started in $\partial_x$, the process $\hat X$ stays in this point for an exponential time (with parameter $\lambda$) and jumps back to $E$ and is restarted with an initial distribution $\mathbf{P}_x(X_S\in \cdot)$, where $S$ is an independent $\Exp(\lambda)$-distributed random variable.

An admissible impulse control policy for this sequence is a potentially infinite joint sequence $K=(\tau_n,\eta_n)_{n\in\N}$, where $(\tau_n)_{n\in\N}$ is an increasing sequence of stopping times and each $\eta_n$ is an $\mathcal{F}_{\tau_n}$-measurable random variable with values in the set $\mathcal{A}(X_{\tau_n})$ (in case this set is not empty; for $\mathcal{A}(X_{\tau_n})=\emptyset$, no control is possible). In our special situation, we consider $\mathcal{A}(x)=\{\partial_x\}$ for all\ $x\in E$ and $\mathcal{A}(x)=\emptyset$ for all\ $x\in E^\partial$. For each impulse control strategy $K$, the controlled process $X$ under the measure $\mathbf{P}^K_x$ behaves as under $\mathbf{P}_x$ until the first control takes place in $\tau_1$, then the process is started from $\eta_1$ and runs uncontrolled until $\tau_2$ and so on. The construction of the process (on an extension of the original probability space) can be found in \cite{Stettner} to give a reference in English.

We consider the optimal impulse control problem
\[\tilde{V}(x)=\sup_{K=(\tau_n,\eta_n)_{n\in\N}}\mathbf{E}^K_x\left\{ \sum_{n=1}^\infty e^{-r\tau_n} g(X_{\tau_n-}) \mathbf{1}_{\{\tau_n<\infty\}} \right\}\]
where $g:E\rightarrow\mathbf{R}$ is as before and we formally set $g(\partial_x)=0$ for all $x\in E$.

Now, noting  that there is an exponential time between each two stopping times of an impulse control strategy, there is a one-to-one correspondence between sequences of stopping times $\bar{\tau}\in \mathcal{S}^\infty_0(\hat{U},\mathbb{F})$ in the infinite stopping problem and impulse control strategies as given above simply by
\[(\tau_n,\eta_n)_{n\in\N}\mapsto (\tau_n)_{n\in\N}\]
and
\[\mathbf{E}^K_x\left\{ \sum_{n=1}^\infty e^{-r\tau_n} g(X_{\tau_n-}) \mathbf{1}_{\{\tau_n<\infty\}} \right\}=\mathbf{E}_x\left\{ \sum_{i=1}^\infty e^{-r\tau_i} g(X_{\tau_i}) \mathbf{1}_{\{\tau_i<\infty\}}\right\}.\]
In particular, we see that $V^\infty(x)=\tilde{V}(x)$ and $(\tau_n^*,\eta_n^*)_{n\in\N}$ is optimal for the impulse control problem iff $(\tau_n^*)_{n\in\N}$ is optimal for the infinite stopping problem. Therefore, the infinite stopping problem can be seen as an impulse control problem for a strong Markov process from a special class. The multiple stopping problem for finite $N$ can therefore be seen as the standard approximation of this problem, that is often used for an approximation for general impulse control problems. For connected results in the finite stopping situation, we refer to \cite[Subsection 3.6]{C13impulse}.

\section{The infinite stopping problem}\label{sec:infinite}
Since the time horizon is assumed to be infinite, it is natural to consider the infinite stopping problem $N=\infty$, that is
 \begin{equation}\label{Optimal infinite Stopping}
V^\infty(x)=V^\infty_\lambda(x)=\sup_{\bar{\tau}\in \mathcal{S}^\infty_0(\hat{U},\mathbb{F})} \mathbf{E}_x\left\{ \sum_{i=1}^\infty e^{-r\tau_i} g(X_{\tau_i}) \mathbf{1}_{\{\tau_i<\infty\}} \right\}.
\end{equation}
We assume in the following that $g$ is continuous and non-negative. 
The main aim of this section is to show that the solution to this infinite stopping problem can be reduced to the solution of the ordinary optimal stopping problem
 \begin{equation*}
\hat{V}(x)=\sup_{\tau} \mathbf{E}_x\left\{ e^{-(r+\lambda)\tau} g(X_{\tau}) \mathbf{1}_{\{\tau<\infty\}} \right\}
\end{equation*}
under general assumptions (note that the discounting parameter $r+\lambda$ is used instead of $r$). To this end, we first give a verification theorem for the value function.
\begin{proposition} \label{thm:verif_infinite}
Assume that $v$ is nonnegative, $r$-excessive, and
\begin{equation}\label{eq:verif1}v(x)\geq g(x)+\lambda (R_{r+\lambda}v)(x).\end{equation}
Then $V^{\infty}\leq v$.

Furthermore, if $\tau^*$ is a stopping time such that for all $x\in E$
\begin{equation}\label{eq:verif2}\mathbf{E}_x\left\{ e^{-r\tau^*} v(X_{\tau^*}) \mathbf{1}_{\{\tau^*<\infty\}} \right\}=v(x)\end{equation}
and
\begin{align}\label{eq:verif3} &\mathbf{E}_x\left\{ e^{-r\tau^*} v(X_{\tau^*}) \mathbf{1}_{\{\tau^*<\infty\}} \right\}\\&= \mathbf{E}_x\left\{ e^{-r\tau^*} g(X_{\tau^*}) \mathbf{1}_{\{\tau^*<\infty\}} \right\}+\lambda \mathbf{E}_x\left\{ e^{-r\tau^*} (R_{r+\lambda}v)(X_{\tau^*}) \mathbf{1}_{\{\tau^*<\infty\}} \right\},\nonumber\end{align}
then $V^\infty=v$ and the sequence $\bar{\tau}\in \mathcal{S}^\infty_0(\hat{U},\mathbb{F})$ associated to $\tau^*$ given by $\tau_1=\tau^*$,
\[\tau_i=\tau^*\circ\theta_{\tau_{i-1}+U_{i-1}}+\tau_{i-1}+U_{i-1}\mbox{ for all }i\geq 2,\]
is optimal.
\end{proposition}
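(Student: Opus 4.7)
My plan is a standard verification argument adapted to the iterated refraction structure. The $r$-excessivity of $v$ controls the process in between consecutive exercise times, while inequality (3.1) controls $v$ at each exercise time; the bridge between them is the resolvent identity $\lambda (R_{r+\lambda}v)(y)=\mathbf{E}_y[e^{-rU}v(X_U)]$ for $U\sim\Exp(\lambda)$ independent of $X$. First I would prove a one-step bound: for an arbitrary $\mathbb{F}$-stopping time $\tau$, $r$-excessivity gives $v(x)\geq \mathbf{E}_x[e^{-r\tau}v(X_\tau)\mathbf{1}_{\{\tau<\infty\}}]$; substituting (3.1) into the integrand and then using the strong Markov property at $\tau$ with $U$ independent of $\mathcal{F}_\tau$ to rewrite $e^{-r\tau}\lambda(R_{r+\lambda}v)(X_\tau)=\mathbf{E}[e^{-r(\tau+U)}v(X_{\tau+U})\mid\mathcal{F}_\tau]$ yields
\[
v(x)\geq \mathbf{E}_x\!\left[e^{-r\tau}g(X_\tau)\mathbf{1}_{\{\tau<\infty\}}\right]+\mathbf{E}_x\!\left[e^{-r(\tau+U)}v(X_{\tau+U})\mathbf{1}_{\{\tau<\infty\}}\right].
\]

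Next I would iterate this estimate along $\bar\tau\in\mathcal{S}_0^\infty(\hat U,\mathbb{F})$. Since $\tau_{i+1}$ is a stopping time for the augmented filtration $\mathbb{F}^{\tau_1+U_1,\dots,\tau_i+U_i}$ with $\tau_{i+1}\geq \tau_i+U_i$, the one-step bound can be applied conditionally on the information up to $\tau_i+U_i$ via the strong Markov property shifted by $\tau_i+U_i$. Induction on $n$ then produces
\[
v(x)\geq \mathbf{E}_x\!\left[\sum_{i=1}^n e^{-r\tau_i}g(X_{\tau_i})\mathbf{1}_{\{\tau_i<\infty\}}\right]+\mathbf{E}_x\!\left[e^{-r(\tau_n+U_n)}v(X_{\tau_n+U_n})\mathbf{1}_{\{\tau_n<\infty\}}\right].
\]
Letting $n\to\infty$, monotone convergence on the first sum (using $g\geq 0$) together with nonnegativity of the remainder (using $v\geq 0$) delivers $V^\infty\leq v$. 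For the optimality part, conditions (3.2) and (3.3) turn both steps of the one-step inequality into equalities when $\tau=\tau^*$; the recursive definition $\tau_i=\tau^*\circ\theta_{\tau_{i-1}+U_{i-1}}+\tau_{i-1}+U_{i-1}$ combined with the strong Markov property propagates these equalities through the induction, so that for every $n$
\[
v(x)=\mathbf{E}_x\!\left[\sum_{i=1}^n e^{-r\tau_i^*}g(X_{\tau_i^*})\mathbf{1}_{\{\tau_i^*<\infty\}}\right]+\mathbf{E}_x\!\left[e^{-r\tau_{n+1}^*}v(X_{\tau_{n+1}^*})\mathbf{1}_{\{\tau_{n+1}^*<\infty\}}\right].
\]

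The main obstacle is the transversality step: showing that the remainder $\mathbf{E}_x[e^{-r\tau_{n+1}^*}v(X_{\tau_{n+1}^*})\mathbf{1}_{\{\tau_{n+1}^*<\infty\}}]$ vanishes as $n\to\infty$, which is exactly what is needed to pass the above identity to the limit and conclude $v(x)=\mathbf{E}_x[\sum_{i=1}^\infty e^{-r\tau_i^*}g(X_{\tau_i^*})\mathbf{1}_{\{\tau_i^*<\infty\}}]\leq V^\infty(x)\leq v(x)$. The inequality $\tau_{n+1}^*\geq \sum_{i=1}^n U_i$ together with the strong law of large numbers for the i.i.d.\ $\Exp(\lambda)$ sequence forces $e^{-r\tau_{n+1}^*}\to 0$ $\mathbf{P}_x$-almost surely. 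Combined with the fact that $(e^{-r\tau_n^*}v(X_{\tau_n^*}))_n$ is, by $r$-excessivity, a nonnegative supermartingale bounded in $L^1$ by $v(x)$, I would extract almost sure and $L^1$ convergence of the remainder to a nonnegative limit and argue that this limit is $0$, possibly after a truncation argument on $v$ if no further boundedness or integrability hypothesis on $v$ is available from the context.
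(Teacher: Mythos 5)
Your route is essentially the paper's verification argument: use \eqref{eq:verif1} at each exercise time, the identity $\lambda(R_{r+\lambda}v)(y)=\mathbf{E}_y\{e^{-rU}v(X_U)\}$ to turn the resolvent term into the value of $v$ one refraction period later, and $r$-excessivity to control the dynamics between $\tau_i+U_i$ and $\tau_{i+1}$. The only organisational difference is that you run a forward induction carrying the remainder $\mathbf{E}_x\{e^{-r(\tau_n+U_n)}v(X_{\tau_n+U_n})\}$, whereas the paper telescopes the infinite sum, pairing $e^{-r\tau_{i+1}}v(X_{\tau_{i+1}})$ with $e^{-r\tau_i}\lambda(R_{r+\lambda}v)(X_{\tau_i})$ and using $\sigma_{i+1}\ge U_i$. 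For the bound $V^\infty\le v$ your bookkeeping is, if anything, cleaner: the nonnegative remainder is simply discarded and no transversality is needed, while (3.2)--(3.3) indeed turn every step into an equality for the candidate sequence, exactly as in the paper.

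The genuine weak point is the transversality step you yourself flag. Your remainder is (by the strong Markov property) the same quantity $\mathbf{E}_x\{e^{-r\tau_k}\lambda(R_{r+\lambda}v)(X_{\tau_k})\}$ that the paper dismisses in one line from $\tau_k\to\infty$, but the justification you sketch does not work as stated: a nonnegative supermartingale such as $(e^{-rt}v(X_t))_t$ is automatically bounded in $L^1$, and this yields only almost sure convergence; Fatou then gives $\mathbf{E}_x\{\lim\}\le\liminf\mathbf{E}_x$, which is the wrong direction, so "extract $L^1$ convergence" is not available. Indeed, if $v$ has a nontrivial $r$-invariant (harmonic) component, $\mathbf{E}_x\{e^{-r\tau_k}v(X_{\tau_k})\}$ need not tend to zero at all, so some additional property of $v$ must be invoked -- for instance that $v$ is an $r$-potential, $v=R_r\sigma$ with $\sigma\in L^1_r$, in which case $\mathbf{E}_x\{e^{-r\tau_k}v(X_{\tau_k})\}=\mathbf{E}_x\{\int_{\tau_k}^\infty e^{-rs}\sigma(X_s)\,ds\}\to0$ by dominated convergence; this is precisely the form of $v$ used when Proposition \ref{thm:verif_infinite} is applied in Proposition \ref{prop:infinite stopping}. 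Either add such a hypothesis (or a uniform integrability condition on $(e^{-r\tau_n^*}v(X_{\tau_n^*}))_n$), or make your truncation idea precise; to be fair, the paper's own "it is clear that" at the same spot is no more detailed, but your supermartingale argument as written claims more than it proves.
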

Before proving the result, let us note that for each nonnegative, $r$-excessive function $v$ it holds that $v\in L^{r+\lambda}_1$ since
\[\mathbf{E}_x\left\{\int_0^\infty e^{-(r+\lambda)t} \left|v(X_t)\right|dt \right\}\leq \int_0^\infty e^{-\lambda t} v(x)dt <\infty,\]
i.e. the function $R_{r+\lambda}v$ is well-defined for all $\lambda$.

\begin{proof}
Let $\bar{\tau}\in \mathcal{S}^\infty_0(\hat{U},\mathbb{F})$ be arbitrary. By noting that $\tau_k\rightarrow\infty$, it is clear that $\E_x\left\{e^{-r\tau_k}\lambda (R_{r+\lambda}v)(X_{\tau_k})\right\}\rightarrow0$. Writing $\tau_{i+1}=\tau_i+\sigma_{i+1}\circ \theta_{\tau_i}$ we obtain
\begin{align*}
\E_x&\left\{ \sum_{i=1}^\infty e^{-r\tau_i} g(X_{\tau_i}) \mathbf{1}_{\{\tau_i<\infty\}} \right\}\\
\leq& \E_x\left\{ \sum_{i=1}^\infty e^{-r\tau_i} (v(X_{\tau_i}) -\lambda (R_{r+\lambda}v)(X_{\tau_i}))\mathbf{1}_{\{\tau_i<\infty\}} \right\}\\
=&\E_x \left\{e^{-r\tau_1} v(X_{\tau_1})\mathbf{1}_{\{\tau_1<\infty\}}\right\}-\lim_{k\rightarrow\infty}\E_x\left\{e^{-r\tau_k}\lambda (R_{r+\lambda}v)(X_{\tau_k})\right\}\\
&+\lim_{k\rightarrow\infty}\sum_{i=1}^k\E_x\left\{  e^{-r\tau_{i+1}} v(X_{\tau_{i+1}})\mathbf{1}_{\{\tau_{i+1}<\infty\}} -e^{-r\tau_i}\lambda (R_{r+\lambda}v)(X_{\tau_i})\mathbf{1}_{\{\tau_i<\infty\}} \right\}\\
=&\E_x \left\{e^{-r\tau_1} v(X_{\tau_1})\mathbf{1}_{\{\tau_1<\infty\}}\right\}\\
&+\sum_{i=1}^\infty\E_x\left\{  e^{-r\tau_{i+1}} v(X_{\tau_{i+1}})\mathbf{1}_{\{\tau_{i+1}<\infty\}} -e^{-r\tau_{i}}\E_{X_{\tau_i}}\left\{e^{-rU_{i}}v(X_{U_{i}})\right\} \mathbf{1}_{\{\tau_i<\infty\}}\right\}\\
=&\E_x \left\{e^{-r\tau_1} v(X_{\tau_1})\mathbf{1}_{\{\tau_1<\infty\}}\right\}\\
&+\sum_{i=1}^\infty\E_x\left\{ e^{-r\tau_{i}} (\E_{X_{\tau_i}}\left\{e^{-r\sigma_{i+1}}v(X_{\sigma_{i+1}})\right\} -\E_{X_{\tau_i}}\left\{e^{-rU_{i}}v(X_{U_{i}})\right\})\mathbf{1}_{\{\tau_i<\infty\}} \right\}\\
\leq& v(x),
\end{align*}
where, in the last step, we used the $r$-excessivity of $v$ together with the fact that $\sigma_{i+1}\geq U_{i}$ for all $i$. This proves the first claim.

For the second claim, note that we have equality in each step for the special sequence.
\end{proof}
Having the previous verification result in mind, we have to find an appropriate candidate function $v$. But on the first view, it is not clear at all how to find such a candidate and for other refraction time distributions, it seems to be impossible to find such a candidate explicitly. But the exponential distribution allows for such a construction.

\subsection{On solving ordinary optimal stopping problems using the resolvent operator}\label{subsec:OS_resolvent}
The key idea is to use a representation of the value function of ordinary optimal stopping problems using the resolvent operator as follows: Consider the ordinary optimal stopping problem
 \begin{equation*}
\hat{V}(x)=\sup_{\tau} \mathbf{E}_x\left\{ e^{-\hat{r}\tau} g(X_{\tau}) \mathbf{1}_{\{\tau<\infty\}} \right\}
\end{equation*}
for some $\hat{r}>0$. It is well-known, that under minimal conditions on $X$ and $g$, the value function $\hat{V}$ can be characterized as the smallest\ $\hat{r}$-excessive majorant of $g$, see e.g. \cite{shiryayev78}. A typical class of $\hat{r}$-excessive functions is given by the $\hat{r}$-resolvent operator $R_{\hat{r}}$ applied to a nonnegative function $\sigma$. On the other hand, the Riesz representation theorem for $\hat{r}$-excessive functions of a \textit{nice} strong Markov process $X$ with state space $E$ yields that each $\hat{r}$-excessive function $u$ can be represented uniquely in the form
\begin{equation}\label{eq:riesz_gen}
u(x)=\int_{E} G_{\hat{r}}(x,y)\, \sigma(dy) + h(x),
\end{equation}
where $G_{\hat{r}}$ denotes the resolvent kernel with respect to some duality measure $m$, $\sigma$ is a Radon measure, and $h$ is an $\hat{r}$-harmonic function. For the exact assumptions on the process $X$ in the framework of Hunt processes, we refer to the discussion in \cite[Chapter 13,14]{CW}. This integral representation of $\hat{r}$-excessive functions can be used fruitfully to establish a dual approach to solving ordinary optimal stopping problems by representing $\hat{V}$ in the form \eqref{eq:riesz_gen}, see \cite{salminen85,MoSa,CI2,CrocceMordecki,Christensen_Salminen}. One basic idea is the following: Under weak integrability assumptions on $g$, it can be seen that $\hat{V}$ is an $\hat{r}$-potential, which yields that $h=0$. Furthermore, if $g$ is smooth enough, it can be seen that -- under some further assumptions -- the measure $\sigma$ is absolutely continuous with respect to the duality measure $m$ and the density of $\sigma$ is given by
\[\sigma(dy)=
\begin{cases}
0,&y\in S^c,\\
\sigma(y)m(dy),&y\in S,
\end{cases}\]
where $S$ denotes the stopping set, $\sigma(y)=(\hat{r}-\mathcal{A})\hat{V}(y)$, and $\mathcal{A}$ denotes the generator/Dynkin operator of $X$; for local operators $\mathcal{A}$, it holds that $\sigma(y)=(\hat{r}-\mathcal{A})g(y)$. We write $\sigma(y)=0$ on $S^c$. Then, the representation \eqref{eq:riesz_gen} reads as
\begin{equation*}
\hat{V}(x)=\int_{E} G_{\hat{r}}(x,y)\, \sigma(dy) + h(x)=\int_E \sigma(y)G_{\hat{r}}(x,y)\, m(dy)=R_{\hat{r}}\sigma(x),
\end{equation*}
which yields a representation of the value function as a resolvent of a nonnegative function $\sigma$. This representation for value functions of ordinary optimal stopping problems will be the key for solving infinite stopping problems in the rest of this section.

\subsection{Reduction of the infinite optimal stopping problem to an ordinary one}
Now, we assume that there exists an optimal stopping time $\tau^*$ for the problem
 \begin{equation}\label{Optimal Stopping Random time horizon}
\hat{V}(x)=\sup_{\tau} \mathbf{E}_x\left\{ e^{-(r+\lambda)\tau} g(X_{\tau}) \mathbf{1}_{\{\tau<\infty\}} \right\}
\end{equation}
and that $\hat{V}$ has the Riesz representation
\begin{equation*}
\hat{V}(x)=(R_{r+\lambda}\sigma)(x)
\end{equation*}
for some function $\sigma:E\rightarrow [0,\infty)$, see the previous discussion. Assuming that the integral defining $\hat{V}$ is finite, we define
\[v(x)=(R_r\sigma)(x)\mbox{ for all }x\in E;\]
note that we changed the parameter of the resolvent from\ $\hat{r}=r+\lambda$ to $r$. Then, $v$ is $r$-excessive and non-negative as the $r$-resolvent of a nonnegative function. Furthermore, since $\hat{V}$ is $r+\lambda$-harmonic on the continuation set $S^c$, i.e. $\sigma=0$ there, we see that $v$ is $r$-harmonic on this set, in particular
\[\mathbf{E}_x\left\{ e^{-r\tau^*} v(X_{\tau^*}) \mathbf{1}_{\{\tau^*<\infty\}} \right\}=v(x).\]
That is, the condition \eqref{eq:verif2} holds. Moreover, using the resolvent equation we obtain
\begin{align*}
v(x)-g(x)\geq v(x)-\hat{V}(x)=((R_r-R_{r+\lambda})\sigma)(x)=\lambda (R_{r+\lambda}(R_r\sigma))(x)=\lambda (R_{r+\lambda}v)(x)
\end{align*}
for all $x\in E$ with equality for $x$ in the stopping set, in particular assumption \eqref{eq:verif1} holds true. By evaluating these functions at $X_{\tau^*}$, multiplying by $e^{-r\tau^*}$ and taking expectations, we obtain using the optimality of $\tau^
*$
\begin{align*}
&\mathbf{E}_x\left\{ e^{-r\tau^*} v(X_{\tau^*}) \mathbf{1}_{\{\tau^*<\infty\}} \right\}-\mathbf{E}_x\left\{ e^{-r\tau^*} g(X_{\tau^*}) \mathbf{1}_{\{\tau^*<\infty\}} \right\}\\
=&\mathbf{E}_x\left\{ e^{-r\tau^*} v(X_{\tau^*}) \mathbf{1}_{\{\tau^*<\infty\}} \right\}-\mathbf{E}_x\left\{ e^{-r\tau^*} \hat{V}(X_{\tau^*}) \mathbf{1}_{\{\tau^*<\infty\}} \right\}\\
=&\lambda \mathbf{E}_x\left\{ e^{-r\tau^*} (R_{r+\lambda}v)(X_{\tau^*}) \mathbf{1}_{\{\tau^*<\infty\}} \right\},
\end{align*}
that is \eqref{eq:verif3}. This shows that that the assumptions of Proposition \ref{thm:verif_infinite} are fulfilled. Putting pieces together, we have the following:
\begin{proposition}\label{prop:infinite stopping}
Assume that there exists an optimal stopping time $\tau^*$ for the problem
 \begin{equation*}
\hat{V}(x)=\sup_{\tau} \mathbf{E}_x\left\{ e^{-(r+\lambda)\tau} g(X_{\tau}) \mathbf{1}_{\{\tau<\infty\}} \right\}
\end{equation*}
and that $\hat{V}$ has the Riesz representation
 \begin{equation}\label{eq:riesz}
 \hat{V}(x)=(R_{r+\lambda}\sigma)(x)\end{equation}
for some function $\sigma:E\rightarrow [0,\infty)$ in $L_1^r$. Then
\[V^\infty(x)=(R_r\sigma)(x)\mbox{ for all }x\in E\]
and the sequence $\bar{\tau}\in \mathcal{S}^\infty_0(\hat{U},\mathbb{F})$ associated to $\tau^*$ given by $\tau_1=\tau^*$,
\[\tau_i=\tau^*\circ\theta_{\tau_{i-1}+U_{i-1}}+\tau_{i-1}+U_{i-1}\mbox{ for all }i\geq 2,\]
is optimal.
\end{proposition}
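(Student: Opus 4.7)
The plan is to reduce the infinite stopping problem to the verification result Proposition \ref{thm:verif_infinite} by producing a concrete candidate $v$ and checking its three hypotheses \eqref{eq:verif1}, \eqref{eq:verif2}, \eqref{eq:verif3}. Motivated by the desired conclusion, the candidate is $v(x) := (R_r\sigma)(x)$; since $\sigma \geq 0$ with $\sigma \in L_1^r$, the function $v$ is well defined, nonnegative, and $r$-excessive as the $r$-resolvent of a nonnegative function.

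The algebraic engine is the resolvent equation applied to $\sigma$, which yields
\[
v(x) - \hat V(x) \;=\; (R_r\sigma)(x) - (R_{r+\lambda}\sigma)(x) \;=\; \lambda (R_{r+\lambda} R_r \sigma)(x) \;=\; \lambda (R_{r+\lambda} v)(x).
\]
Since $\hat V \geq g$ pointwise (by definition of the auxiliary stopping problem), this immediately delivers \eqref{eq:verif1}, with equality precisely on the stopping set $S = \{\hat V = g\}$.

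For \eqref{eq:verif2}, the Riesz representation $\hat V = R_{r+\lambda}\sigma$ forces $\sigma$ to vanish on the continuation region $S^c$ (this is the classical content of the fact that the value function $\hat V$ is $(r+\lambda)$-harmonic on $S^c$ in its Riesz decomposition). Hence $v = R_r\sigma$ is $r$-harmonic on $S^c$, and since the optimal $\tau^*$ takes values in $S$ on $\{\tau^* < \infty\}$, the strong Markov property gives \eqref{eq:verif2}. Identity \eqref{eq:verif3} then drops out by evaluating $v - \hat V = \lambda R_{r+\lambda} v$ at $X_{\tau^*}$ and using $\hat V(X_{\tau^*}) = g(X_{\tau^*})$ on $\{\tau^* < \infty\}$, which is the optimality of $\tau^*$ for $\hat V$. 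Applying Proposition \ref{thm:verif_infinite} then yields $V^\infty = v = R_r\sigma$ together with the asserted optimality of the associated sequence.

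The main delicate point is the step that identifies $\sigma$ as concentrated on $S$: in the general strong-Markov setting this rests on the uniqueness of the Riesz decomposition of the $(r+\lambda)$-excessive function $\hat V$ into potential plus harmonic parts, combined with $(r+\lambda)$-harmonicity of $\hat V$ on $S^c$ coming from the dynamic programming principle at $\tau^*$. Once this is granted, everything else is mechanical: the resolvent identity transports harmonicity from $\hat V$ on $S^c$ to $v$ on $S^c$, and the three verification hypotheses follow automatically.
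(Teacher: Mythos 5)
Your proposal is correct and follows essentially the same route as the paper: the same candidate $v=R_r\sigma$, the same use of the resolvent equation $v-\hat V=\lambda R_{r+\lambda}v$ to check \eqref{eq:verif1} and \eqref{eq:verif3}, the vanishing of $\sigma$ on $S^c$ to get $r$-harmonicity of $v$ there for \eqref{eq:verif2}, and the final appeal to Proposition \ref{thm:verif_infinite}. Your explicit remark that the concentration of $\sigma$ on $S$ is the delicate point is a fair (and slightly more careful) articulation of what the paper simply absorbs into its standing assumptions on the Riesz representation.
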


\begin{remark}
We point out that the optimal stopping problem \eqref{Optimal Stopping Random time horizon} has an interpretation as an optimal stopping problem with random time horizon. To this end, let $U\sim \Exp(\lambda)$ independent of $X$ and $\tau$ be a stopping time. Then
\[ \mathbf{E}_x\left\{ e^{-r\tau} g(X_{\tau})e^{-\lambda\tau}\mathbf{1}_{\{\tau<\infty\}} \right\} = \mathbf{E}_x\left\{ e^{-r\tau} g(X_{\tau})\mathbf{1}_{\{\tau<U\}} \right\}. \]
That is, we can interpret the optimal stopping problem \eqref{Optimal Stopping Random time horizon} as a problem with independent and exponentially distributed time horizon. The mean of the random time horizon is the same as the mean waiting time after stopping in the infinite stopping problem.
\end{remark}

\section{Some examples for the infinite stopping problem}\label{sec:ex_infinite}
\subsection{Infinite American call problem for the geometric Brownian motion}\label{subsec:call_brown}
As an application of the previous results, we consider the case 
that $g(x)=(x-K)^+$ and $X$ is a geometric Brownian motion on $(0,\infty)$, i.e., the regular linear diffusion $X$ given as the solution of the It\^{o} equation $dX_t = \mu X_t dt + \sigma X_t dW_t$,  where $\mu \in \mathbf{R}$ and $\sigma>0$. Here, $W$ is a Wiener process. For the problem to be well-defined, we assume that $\mu<r$. The associated ordinary optimal stopping problem
 \begin{equation}\label{eq:OS_1}
\hat{V}(x)=\sup_{\tau} \mathbf{E}_x\left\{ e^{-(r+\lambda)\tau} (X_{\tau}-K)^+ \mathbf{1}_{\{\tau<\infty\}} \right\}
\end{equation}
has the well-known optimal stopping time
\[\tau^*=\inf\{t\geq0:X_t\geq \hat{x}_\infty\},\;\;\mbox{where}\;\;\hat{x}_\infty=\frac{\beta}{\beta-1}K,\]
for
\[\beta=\left(\frac{1}{2}-\frac{\mu}{\sigma^2} \right)+\sqrt{\left(\frac{1}{2}-\frac{\mu}{\sigma^2} \right)^2+\frac{2(r+\lambda)}{\sigma^2}}>1,\]
and
\begin{equation*}
\hat{V}(x) =
\begin{cases}
x-K, & x \geq \hat{x}_\infty \\
\frac{{\hat{x}_\infty}-K}{{{\hat{x}_\infty}}^\beta} x^\beta, & x \leq {\hat{x}_\infty}.
\end{cases}
\end{equation*}
Writing
\[\sigma(x)=(r+\lambda-\mathcal{A})g(x)=(r+\lambda-\mu)x-K(r+\lambda)\]
for $x\geq {\hat{x}_\infty}$ and $\sigma(x)=0$ for $x<{\hat{x}_\infty}$, we find that $\hat{V}$ can be represented as
\begin{equation*}
\hat{V}(x)=(R_{r+\lambda}\sigma)(x),
\end{equation*}
see also \cite{CrocceMordecki},\ Section 5.1. Therefore, by Proposition \ref{prop:infinite stopping} the sequence $\bar{\tau}\in \mathcal{S}^\infty_0(\hat{U},\mathbb{F})$ associated to $\tau^*$ given by
\[\tau_i=\inf\{t\geq\tau_{i-1}+U_{i-1}:X_t\geq {\hat{x}_\infty}\}\mbox{ for all }i\geq 2,\]
is optimal and
\begin{equation*}
V^\infty(x)=(R_{r}\sigma)(x).
\end{equation*}
Using the representation of the resolvent for diffusion processes discussed in \eqref{Resolvent integral representation} below, a short calculation yields
 \begin{equation*}
V^\infty(x)=\begin{cases}
c_1x+c_2+c_3x^a, & x \geq {\hat{x}_\infty}, \\
c_4x^b, & x < {\hat{x}_\infty},
\end{cases}
\end{equation*}
where
\begin{displaymath}
b=\left(\frac{1}{2}-\frac{\mu}{\sigma^2} \right)+\sqrt{\left(\frac{1}{2}-\frac{\mu}{\sigma^2} \right)^2+\frac{2r}{\sigma^2}}>1, \;
a=\left(\frac{1}{2}-\frac{\mu}{\sigma^2} \right)-\sqrt{\left(\frac{1}{2}-\frac{\mu}{\sigma^2} \right)^2+\frac{2r}{\sigma^2}}<0,
\end{displaymath}
and
\begin{align*}
c_1&=\frac{r+\lambda-\mu}{r-\mu},\;\;
c_2=-K\frac{r+\lambda}{r},\\
c_3&=\frac{2}{\sigma^2}B_r^{-1}{({\hat{x}_\infty})}^{b+2\mu/\sigma^2-1}\left(-\frac{r+\lambda-\mu}{b+2\mu/\sigma^2}{\hat{x}_\infty}-\frac{K(r+\lambda)}{b+2\mu/\sigma^2-1}\right),\\
c_4&=\frac{2}{\sigma^2}B_r^{-1}{({\hat{x}_\infty})}^{a+2\mu/\sigma^2-1}\left(-\frac{r+\lambda-\mu}{a+2\mu/\sigma^2}{\hat{x}_\infty}-\frac{K(r+\lambda)}{a+2\mu/\sigma^2-1}\right).
\end{align*}
Note that in the degenerated case $\lambda=0$, one obtains $V^\infty=\hat{V}$, as expected.

\subsection{Infinite American call problem for geometric L\'evy processes}\label{subsec:call_levy}
One can generalize the previous result to general geometric L\'evy processes, that is $X=e^Y$, where $Y$ is a L\'evy process with $Ee^{Y_1}<e^r$. In this case, it is known that the optimal stopping time for the problem \eqref{eq:OS_1} is also a threshold time and the optimal threshold is given by
\[{\hat{x}_\infty}=KEe^M,\]
where $M$ denotes the running maximum process of $Y$ evaluated at an independent, $\Exp(r+\lambda)$-distributed time, see \cite{M}.

Finding an explicit representation of the form \eqref{eq:riesz} for the value function seems to hard for general L\'evy processes. But for spectrally positive L\'evy processes \cite[Proposition 2.16]{CST} is applicable and yields that again
\[\sigma(y)=(r+\lambda-\mathcal{A})(e^y-K)\;\mbox{ for }y\geq {\hat{x}_\infty},\]
where $\mathcal{A}$ denotes the extended infinitesimal generator of $Y$, which gives the desired representation. In this case, $\mathcal{A}$ acts as
\begin{align*}
\mathcal{A}f(y)=&\frac{c^2}{2}\frac{d^2}{dy^2}f(y)+ b\frac{d}{dy}f(y)\\
&+\int_{(0,\infty)} \left(f(y+z)-f(y)-z\frac{d}{dz}f(z)1_{\{|z|<1\}}\right)\pi(dz),
\end{align*}
where $(b,c,\pi)$ is the L\'evy triple of $X$. Therefore, $\sigma$ can be identified to have the following easy form:
\begin{align}\label{eq:sigma_levy}
\sigma(y)&=(r+\lambda)(e^y-K)+e^y\left(\frac{c^2}{2}+b+\int_{(0,\infty)}({\rm e}^z-1-y1_{|z|<1})\pi(dz)\right)\\
&=-(r+\lambda)K+(r+\lambda+\psi(1)){e}^y,
\end{align}
for $y\geq {\hat{x}_\infty}$ and $=0$ for $y<{\hat{x}_\infty}$, where $\psi(1)=\log \E_0({\rm e}^{Y_1})$. This yields the optimal strategies and the semi-explicit representation for the value function
 \begin{equation*}
V^\infty(x)=(R_{r}\sigma)(x).
\end{equation*}
A more explicit representation of the resolvent operator for spectrally one-sided L\'evy processes is discussed in Subsection \ref{subsec:LP} below.

\subsection{Infinite investment problem}
To illustrate that the theory is not restricted to one-dimensional problems, we now consider one of the most studied multidimensional ordinary optimal stopping problems, namely
\[v^1(x)=\sup_{\tau}\E_{x}(e^{-\beta\tau}(X^{(0)}_\tau-X^{(1)}_\tau-...-X^{(d)}_\tau)), ~~x\in(0,\infty)^{d+1},\beta>0,\]
where $(X^{(0)},...,X^{(d)})$ is a $d+1$-dimensional (correlated) geometric Brownian motion. This problem is motivated by an investment situation: An investor has the possibility to choose a time point to make an investment. She has to pay the sum of the cost factors $X^{(1)},...,X^{(d)}$ and gets out $X^{(0)}$. By a change of measure argument, it can immediately be seen that the problem can be reduced to the following
\begin{equation}\label{eq:value_invest}
\hat{V}(x)=\sup_{\tau} \E_xe^{-\beta\tau}\left(1-\sum_{i=1}^dX_\tau^{(i)}\right)^+,\;\;\;x\in(0,\infty)^d.
\end{equation}
The problem was studied in \cite{DS,OS,ho,NR,CI2,Christensen_Salminen} from different points of view. Recently, the problem was solved \cite{Christensen_Salminen} by characterizing the boundary of the optimal stopping set $S$ as the unique solution to an integral equations using the Riesz representation approach as described in Subsection \ref{subsec:OS_resolvent}. Then, the value function can be represented as
\[\hat{V}(x)=R_\beta\sigma(x)\]
with $\sigma$ given by $\sigma(y)=(\beta-\mathcal{A})g(y)$ on $S$ and $\sigma(y)=0$ on $S^c$. Now, we consider the corresponding infinite stopping problem: Over the time, new investment opportunities arise, but after making an investment, the investor has to wait for the next investment opportunity; as before, we assume this refraction period to be exponentially distributed. Due to the memoryless property of the exponential distribution, this assumption seems to be reasonable in this situation. We are faced with the problem
 \begin{equation*}
V^\infty(x)=\sup_{\bar{\tau}\in \mathcal{S}^\infty_0(\hat{U},\mathbb{F})} \mathbf{E}_x\left\{ \sum_{i=1}^\infty e^{-r\tau_i} g(X_{\tau_i}) \mathbf{1}_{\{\tau_i<\infty\}} \right\},
\end{equation*}
where $X=(X^{(1)},...,X^{(d)})$ is the geometric Brownian motion and $g(x)=(1-x_1-...-x_d)^+$. Again, this problem can immediately be solved by applying Proposition \ref{prop:infinite stopping}: We consider the ordinary optimal investment problem discussed above with parameter $\beta=r+\lambda$ and obtain $S$ and the function $\sigma$. Then
 \begin{equation*}
V^\infty(x)=R_r\sigma(x)
\end{equation*}
and the sequence of optimal stopping times is given recursively by
\[\tau_i=\inf\{t\geq \tau_{i-1}+U_{i-1}:X_t\in S\}.\]
This result can be extended to multidimensional geometric L\'evy processes with only negative jumps by using the results from \cite{Christensen_Salminen}.

\section{The finite stopping problem}\label{sec:mult}
In the previous sections we studied the limiting case of the optimal multiple stopping problem \eqref{Optimal Multiple Stopping} where the number of exercise rights was infinite, i.e $N=\infty$. In this section, we focus on the finite case $N<\infty$. The following theorem gives the solution of the optimal multiple stopping problem \eqref{Optimal Multiple Stopping} for finite $N$ via a strip of $N$ optimal single stopping problems with modified payoff functions. For brevity, denote
\begin{equation}\label{def: payoff H}
H^i(x)=g(x)+\lambda(R_{r+\lambda}V_\lambda^{i-1})(x),
\end{equation}
for all $i=1,\dots,N$, where $V^0_\lambda=0$. We make the following assumptions on the payoff structure:
\begin{itemize}
\item[\textbf{(A1)}] the payoff $g:E\rightarrow[0,\infty)$ is lower semicontinuous and in $L^1_r$,
\item[\textbf{(A2)}] there exist an $r$-harmonic function $h:E\rightarrow\mathbf{R}_+$ such that the function $x\mapsto\frac{g(x)}{h(x)}$ is bounded.
\end{itemize}

We have the following result.

\begin{theorem}\label{thm:main}
Assume that \textbf{(A1)} and \textbf{(A2)} hold. Then, for all $i=1,\dots,N$, the value function $V^i_\lambda$ exists and can be identified recursively as the least $r$-excessive majorant of the function $H^i$. 
Furthermore, if, in addition, the function $g$ is continuous for all $i=1,\dots,N$, then the optimal stopping time $\tau^*_{i}$ exists and can be expressed as
$$\tau^*_{i}=\inf\{ t\geq \tau^*_{i-1}+U_{i-1} \ : H^{N-i+1}(X_t)=V^{N-i+1}_\lambda(X_t) \}.$$
\end{theorem}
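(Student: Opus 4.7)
The plan is to prove the theorem by induction on $i$, combining the Bellman-type dynamic programming principle with the characteristic resolvent identity $\lambda(R_{r+\lambda}f)(x)=\mathbf{E}_x[e^{-rU}f(X_U)]$ for $U\sim\Exp(\lambda)$ independent of $X$. The base case $i=1$ is immediate: since $V^0_\lambda\equiv 0$ we have $H^1=g$, and the one-stopping problem is the ordinary optimal stopping problem, so under (A1)--(A2) standard theory (see \cite{ps}) yields that $V^1_\lambda$ is the least $r$-excessive majorant of $g$ and, when $g$ is continuous, that $\inf\{t\geq 0:g(X_t)=V^1_\lambda(X_t)\}$ is optimal.

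For the inductive step, assume the statement at level $i-1$ and fix an admissible $\bar{\tau}\in\mathcal{S}^i_0(\hat{U},\mathbb{F})$. The augmented-filtration construction of \cite{CIJ} is tailored so that after the time-shift $\theta_{\tau_1+U_1}$ the tail $(\tau_2,\ldots,\tau_i)$ restricts to an admissible $(i-1)$-policy for the shifted strong Markov process started at $X_{\tau_1+U_1}$. Applying the strong Markov property at $\tau_1+U_1$ together with the induction hypothesis gives the conditional Bellman inequality
\[\mathbf{E}_x\!\left[\sum_{k=2}^{i}e^{-r\tau_k}g(X_{\tau_k})\mathbf{1}_{\{\tau_k<\infty\}}\,\Big|\,\mathcal{F}_{\tau_1+U_1}\right]\leq e^{-r(\tau_1+U_1)}V^{i-1}_\lambda(X_{\tau_1+U_1}),\]
with equality when the tail is chosen inductively optimal. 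Integrating over $U_1$ using its independence of $X$ and invoking the resolvent identity collapses this conditional bound, after conditioning on $\mathcal{F}_{\tau_1}$, into $e^{-r\tau_1}\lambda(R_{r+\lambda}V^{i-1}_\lambda)(X_{\tau_1})$, so that
\[V^i_\lambda(x)=\sup_{\tau_1}\mathbf{E}_x\!\left\{e^{-r\tau_1}H^i(X_{\tau_1})\mathbf{1}_{\{\tau_1<\infty\}}\right\}.\]
This is an ordinary optimal stopping problem with reward $H^i$; a further appeal to \cite{ps} identifies $V^i_\lambda$ as the least $r$-excessive majorant of $H^i$ and, in the continuous case, exhibits $\inf\{t\geq 0:H^i(X_t)=V^i_\lambda(X_t)\}$ as the optimal first stop. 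Concatenating with the inductively optimal continuation and re-indexing $i\mapsto N-i+1$ gives the claimed form of $\tau^*_i$.

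To close the induction I must propagate the regularity and integrability hypotheses required by \cite{ps} from level $i-1$ to level $i$. Assumption (A2) together with the $r$-excessivity of $h$ gives the inductive bound $V^{i-1}_\lambda\leq C_{i-1}h$, whence $R_{r+\lambda}V^{i-1}_\lambda$ is finite and $H^i/h$ is bounded; the membership $H^i\in L_1^r$ is inherited from $g\in L_1^r$ via (A1) and the contraction property of $R_{r+\lambda}$. Lower semi-continuity of $H^i$ follows from that of $g$ and the standing regularity of the resolvent on the strong Markov class under consideration, and is preserved by the formation of the least $r$-excessive majorant; under the additional assumption of continuity of $g$, the same properties propagate continuity of $V^{i-1}_\lambda$ and thus of $H^i$, legitimising the first-entry characterisation of $\tau^*_i$.

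The main obstacle is the filtration-theoretic bookkeeping around the dynamic programming reduction: one has to verify carefully that $\theta_{\tau_1+U_1}$ transports admissible $i$-policies to admissible $(i-1)$-policies within the augmented-filtration framework of \cite{CIJ}, and conversely that concatenating a first stop of the ordinary problem for $H^i$ with an inductively optimal continuation produces a policy genuinely lying in $\mathcal{S}^i_0(\hat{U},\mathbb{F})$. Once these structural lemmas are in place, the rest of the argument is a clean induction combining standard optimal stopping theory with the resolvent identity.
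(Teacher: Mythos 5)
Your overall route is the same as the paper's: reduce the $i$-right problem to an ordinary stopping problem with reward $H^i(x)=g(x)+\lambda(R_{r+\lambda}V^{i-1}_\lambda)(x)$ via the identity $\lambda(R_{r+\lambda}f)(x)=\mathbf{E}_x\{e^{-rU}f(X_U)\}$, identify $V^i_\lambda$ as the least $r$-excessive majorant by standard theory, and control finiteness through \textbf{(A2)} (the paper does this with a Doob $h$-transform, you with the equivalent bound $V^{i-1}_\lambda\leq C_{i-1}h$; the paper writes out $N=2$ and invokes induction, you induct directly). Two points in your write-up, however, are genuine gaps as stated. First, in the lower (``$\geq$'') direction you claim equality ``when the tail is chosen inductively optimal''; under \textbf{(A1)}--\textbf{(A2)} alone $g$ is only lower semicontinuous and an optimal continuation need not exist, so this step fails precisely in the regime covered by the first assertion of the theorem. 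The correct (and the paper's) fix is to take a sequence of stopping times $\tau^*_n$ with $\mathbf{E}_x\{e^{-r\tau^*_n}g(X_{\tau^*_n})\}\uparrow V^{i-1}_\lambda(x)$, form the concatenated policy $\tau_{2,n}=(\tau_1+U_1)+\tau^*_n\circ\theta_{\tau_1+U_1}$, and pass to the limit by Fubini, the strong Markov property and monotone convergence; equality for an optimal tail is then not needed. Second, the ``filtration-theoretic bookkeeping'' you defer is not optional: the reverse inequality requires exhibiting an admissible element of $\mathcal{S}^i_0(\hat{U},\mathbb{F})$, and this is exactly what the explicit form of $\tau_{2,n}$ delivers (it is a stopping time of the augmented filtration of \cite{CIJ} and satisfies $\tau_{2,n}\geq\tau_1+U_1$ by construction), so once you write the approximating policy explicitly the obstacle you flag dissolves; leaving it as a ``structural lemma to be supplied'' leaves the proof incomplete.

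Two smaller remarks. Your claim that $H^i\in L^1_r$ follows ``from the contraction property of $R_{r+\lambda}$'' is neither obviously true (an $r$-harmonic $h$ need not lie in $L^1_r$, so $V^{i-1}_\lambda\leq C_{i-1}h$ does not give integrability of $\lambda R_{r+\lambda}V^{i-1}_\lambda$ along the whole time axis) nor needed: finiteness and the majorant characterisation only require boundedness of $H^i/h$ together with lower semicontinuity, which is how the paper argues after the $h$-transform. Likewise, lower semicontinuity of $\lambda R_{r+\lambda}V^{i-1}_\lambda$ deserves a sentence of justification (e.g.\ via excessivity/fine continuity or the standing path regularity of $X$) rather than an appeal to ``standing regularity of the resolvent''. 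With the approximation argument and the explicit concatenated policy inserted, your induction matches the paper's proof.
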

\begin{proof} For notational convenience, we concentrate on the case $N=2$; the general case then holds by a straightforward induction.

Fix $\lambda\geq0$ and $x\in E$. Let $\tau$ be an arbitrary $\mathbb{F}$-stopping time. First, we write
\begin{displaymath}
\mathbf{E}_x\left\{ e^{-r\tau} g(X_\tau) \mathbf{1}_{\{\tau<\infty\}} \right\}=\mathbf{E}^h_x\left\{\frac{g(X_\tau)}{h(X_\tau)}\mathbf{1}_{\{\tau<\infty\}}\right\}h(x),
\end{displaymath}
where $\mathbf{E}^h_x$ is the expectation with respect to the probability $\mathbf{P}^h_x$ associated to the Doob's $h$-transform $X^h$, see \cite{Doob}. Since the function $x\mapsto \frac{g(x)}{h(x)}$ is bounded and lower semicontinuous and the resolvent of zero is zero, we conclude that the function
\[ V^1_\lambda(x)=\sup_{\tau_1} \mathbf{E}_x\left\{ e^{-r\tau_1} g(X_{\tau_1}) \mathbf{1}_{\{\tau_1<\infty\}} \right\}  \]
is finite and is the least $r$-excessive majorant of $x\mapsto g(x)$. Since the underlying $X$ is strong Markov, we find that
\[ \mathbf{E}_x\left\{e^{-r(\tau+t)}V^1_\lambda(X_{\tau+t})\right\}=\mathbf{E}_x\left\{\mathbf{E}_x\left\{e^{-r(\tau+t)}V^1_\lambda(X_{\tau+t})\left| \mathcal{F}_\tau\right.\right\}\right\}=\mathbf{E}_x\left\{e^{-r\tau}\mathbf{E}_{X_\tau}\left\{e^{-rt}V^1_\lambda(X_{t})\right\}\right\}, \]
holds for all $t\geq0$. By integrating this expression over the positive reals with respect to $t$ with weight $\lambda e^{-\lambda t}$ we obtain
\begin{align*}
\mathbf{E}_x\left\{e^{-r(\tau+U)}V^1_\lambda(X_{\tau+U})\right\}&=\mathbf{E}_x\left\{e^{-r\tau}\mathbf{E}_{X_\tau}\left\{\lambda \int_0^\infty e^{-(r+\lambda)t}V^1_\lambda(X_{t})dt\right\}\right\}\\&=\mathbf{E}_x\left\{e^{-r\tau} \lambda(R_{r+\lambda}V^1_\lambda)(X_{\tau}) \right\}.
\end{align*}
Now, let $(\tau_1,\tau_2)\in \mathcal{S}^2_0(\hat{U},\mathbb{F})$ be arbitrary. Then we find that
\begin{displaymath}
\begin{split}
\mathbf{E}_x\left\{ e^{-r\tau_1}g(X_{\tau_1})+e^{-r\tau_2}g(X_{\tau_2}) \right\} &\leq \mathbf{E}_x\left\{e^{-r\tau_1}g(X_{\tau_1})+e^{-r(\tau_1+U_1)}V^1_\lambda(X_{\tau_1+U_1}) \right\} \\
&=\mathbf{E}_x\left\{e^{-r\tau_1}(g(X_{\tau_1})+\lambda (R_{r+\lambda}V^1_\lambda)(X_{\tau_1})) \right\},
\end{split}
\end{displaymath}
which, in turn, yields the inequality
\begin{displaymath}
\sup_{(\tau_1,\tau_2)\in \mathcal{S}^2_0(\hat{U},\mathbb{F})}\mathbf{E}_x\left\{ e^{-r\tau_1}g(X_{\tau_1})+e^{-r\tau_2}g(X_{\tau_2}) \right\} \leq \sup_{\tau_1}\mathbf{E}_x\left\{e^{-r\tau_1}(g(X_{\tau_1})+\lambda (R_{r+\lambda}V^1_\lambda)(X_{\tau_1})) \right\}.
\end{displaymath}

To prove the opposite inequality, let $\tau_1$ be arbitrary. Find an optimal stopping sequence such that
\[\mathbf{E}_x\left\{e^{-r\tau_n^*}g(X_{\tau_n^*})\right\}\uparrow \sup_\tau\mathbf{E}_x\left\{e^{-r\tau}g(X_{\tau})\right\}=V^1_\lambda(x)\]
and write
\[\tau_{2,n}:=(\tau_1+U_1)+\tau_n^*\circ\theta_{\tau_1+U_1},\]
where $\theta_\cdot$ denotes the time-shift operator. By Fubini's theorem and the strong Markov property
\begin{align*}
\mathbf{E}_x&\left\{e^{-r\tau_{2,n}}g(X_{\tau_{2,n}})\right\}\\&=\mathbf{E}_x\left\{e^{-r((\tau_1+U_1)+\tau_n^*\circ\theta_{\tau_1+U_1})}g(X_{(\tau_1+U_1)+\tau_n^*\circ\theta_{\tau_1+U_1}})\right\}\\
&=\mathbf{E}_x\left\{e^{-r\tau_1}\mathbf{E}_x\left\{e^{-rU_1}\mathbf{E}_x\left\{e^{-r\tau_n^*\circ\theta_{\tau_1+U_1}}g(X_{(\tau_1+U_1)+\tau_n^*\circ\theta_{\tau_1+U_1}})\Big|\mathcal{F}_{\tau_1+U_1}\right\}\Big|\mathcal{F}_{\tau_1}\right\}\right\}\\
&=\int_0^\infty \mathbf{E}_x\left\{e^{-r\tau_1}\mathbf{E}_x\left\{e^{-rt}\mathbf{E}_x\left\{e^{-r\tau_n^*\circ\theta_{\tau_1+t}}g(X_{(\tau_1+t)+\tau_n^*\circ\theta_{\tau_1+t}})\Big|\mathcal{F}_{\tau_1+t}\right\}\Big|\mathcal{F}_{\tau_1}\right\}\right\}\lambda e^{-\lambda t}dt\\
&=\int_0^\infty \mathbf{E}_x\left\{e^{-r\tau_1}\mathbf{E}_{X_{\tau_1}}\left\{e^{-rt}\mathbf{E}_{X_t}\left\{e^{-r\tau_n^*}g(X_{\tau_n^*})\right\}\right\}\right\}\lambda e^{-\lambda t}dt\\
&=\mathbf{E}_x\left\{e^{-r\tau_1}\mathbf{E}_{X_{\tau_1}}\left\{e^{-rU_1}\mathbf{E}_{X_{U_1}}\left\{e^{-r\tau_n^*}g(X_{\tau_n^*})\right\}\right\}\right\}.
\end{align*}
Since
\[\mathbf{E}_{X_{U_1}}\left\{e^{-r\tau_n^*}g(X_{\tau_n^*})\right\}\uparrow V^1_\lambda(X_{U_1}),\]
we obtain by monotone convergence that
\begin{align*}
\mathbf{E}_x\left\{e^{-r\tau_{2,n}}g(X_{\tau_{2,n}})\right\}&\rightarrow\mathbf{E}_x\left\{e^{-r\tau_1}\mathbf{E}_{X_{\tau_1}}\left\{e^{-rU_1}V^1_\lambda(X_{U_1})\right\}\right\}\\
&=\mathbf{E}_x\left\{e^{-r\tau_1}\lambda R_{r+\lambda}V^1_\lambda(X_{\tau_1})\right\}.
\end{align*}
We have proved
\begin{displaymath}
\sup_{(\tau_1,\tau_2)\in \mathcal{S}^2_0(\hat{U},\mathbb{F})}\mathbf{E}_x\left\{ e^{-r\tau_1}g(X_{\tau_1})+e^{-r\tau_2}g(X_{\tau_2}) \right\} \geq \sup_{\tau_1}\mathbf{E}_x\left\{e^{-r\tau_1}(g(X_{\tau_1})+\lambda (R_{r+\lambda}V^1_\lambda)(X_{\tau_1})) \right\}.
\end{displaymath}
The form of the optimal stopping times holds by the general theory of optimal stopping under the stated assumptions.
\end{proof}

\begin{remark}\label{remark:order}
The previous result can also be obtained by using the general theory developed in \cite{CIJ}.
Furthermore, one sees that the optimal stopping times are first-entrance-times into stopping sets $S^n$, where $S^1$ is the set for one stopping opportunity, $S^2$ for two etc. By the general theory of multiple optimal stopping with general random refraction times, see \cite[Lemma 4.3 ff.]{CIJ}, it is clear that $S^1\subseteq S^2\subseteq \dots \subseteq S^\infty$, where $S^\infty$ is the stopping set for the infinite stopping problem discussed in the previous sections. This fact turns out to be important for solving multiple stopping problems ($N$ finite) in the following.
\end{remark}

\section{Closed-form solutions for the finite stopping problem using the resolvent operator}\label{sec:closed_form} To obtain closed-form solutions of the previous problems using Theorem \ref{thm:main}, it is crucial to have explicit representations for the resolvent of the underlying process. Here, we consider the particularly interesting cases of diffusion processes and (spectrally one-sided) L\'evy processes.

Most solvable ordinary optimal stopping problems have a one-sided solution, that is, the optimal stopping time is of threshold-type. Therefore, it is our aim in the section to find sufficient conditions that guarantee that the optimal stopping times in the multiple stopping problem are also of this type. We give conditions that can be checked a priori, i.e. without solving the sequence of stopping problems using Theorem \ref{thm:main} explicitly.

\subsection{Diffusion dynamics}\label{subsec:diff_one_sided} We assume that the state process $X$ evolves on $\mathbf{R}_+$ and follows the regular linear diffusion given as the weakly unique solution of the It\^{o} equation
\begin{equation}\label{SDE}
dX_t = \mu(X_t)dt+\sigma(X_t)dW_t, \ X_0=x.
\end{equation}
Here, $W$ is a Wiener process on $(\Omega,\mathcal{F},\mathbb{F},\mathbf{P})$ and the real valued functions $\mu$ and $\sigma>0$ are assumed to be continuous. Using the terminology of \cite{BS}, the boundaries $0$ and $\infty$ are natural, see \cite{BS}, pp. 18--20, for the boundary classification of diffusions. As usually, we denote as $\mathcal{A}=\frac{1}{2}\sigma^2(x)\frac{d^2}{dx^2}+\mu(x)\frac{d}{dx}$ the second order linear differential operator associated to $X$. Furthermore, we denote as, respectively, $\psi_r>0$ and $\varphi_r>0$ the increasing and decreasing solution of the ODE $\mathcal{A}u=ru$, where $r>0$, defined on the domain of the characteristic operator of $X$. By posing appropriate boundary conditions depending on the boundary classification of the diffusion $X$, the functions $\psi_r$ and $\varphi_r$ are defined uniquely up to a multiplicative constant and can be identified as the minimal $r$-\replaced{harmonic}{excessive} functions -- for the boundary conditions and further properties of $\psi_r$ and $\varphi_r$, see \cite{BS}, pp. 18--20. Finally, we define the speed measure $m$ and the scale function $S$ of $X$ via the formul\ae~$m'(x)=\frac{2}{\sigma^2(x)}e^{B(x)}$ and  $S'(x)= e^{-B(x)}$ for all $x \in \mathbf{R}_+$, where $B(x):=\int^x \frac{2\mu(y)}{\sigma^2(y)}dy$, see \cite{BS}, pp. 17.

We know from the literature that for a given $f\in L_1^r$ the resolvent $R_rf$ can be expressed as
\begin{equation}\label{Resolvent integral representation}
(R_rf)(x)=B_r^{-1}\varphi_r(x)\int_0^x \psi_r(y)f(y)m'(y)dy+B_r^{-1}\psi_r(x)\int_x^\infty \varphi_r(y)f(y)m'(y)dy,
\end{equation}
for all $x \in \mathbf{R}_+$, where $B_r=\frac{\psi_r'(x)}{S'(x)}\varphi_r(x)-\frac{\varphi_r'(x)}{S'(x)}\psi_r(x)$ denotes the Wronskian determinant, see \cite{BS}, pp. 19. Finally, we remark that the value of $B_r$ does not depend on the state variable $x$ but depends on the rate $r$.

Given the underlying $X$, we consider the multiple optimal stopping problem
\begin{equation}\label{Optimal Multiple Stopping Diffusion}
V^N_\lambda(x)=\sup_{\bar{\tau}\in \mathcal{S}^N_0(\hat{U},\mathbb{F})} \mathbf{E}_x\left\{ \sum_{i=1}^N e^{-r\tau_i} g(X_{\tau_i}) \mathbf{1}_{\{\tau_i<\infty\}} \right\},
\end{equation}
with payoff function $g$, we make more specific assumptions on $g$ later.

For some preliminary analysis, define the operator $L$ as follows
\begin{equation}\label{def: operator L}
(Lf)(x)=\frac{\psi'_r(x)}{S'(x)}f(x)-\frac{f'(x)}{S'(x)}\psi_r(x).
\end{equation}
Then we have the following lemma.
\begin{lemma}\label{Operator L Lemma}
Assume that the function $f$ is continuous and $r$-excessive for diffusion $X$. Then the function $x\mapsto\frac{\lambda(R_{r+\lambda}f)(x)}{\psi_r(x)}$ is decreasing.
\end{lemma}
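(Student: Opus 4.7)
The plan is to reduce the monotonicity statement to a pointwise inequality $L(R_{r+\lambda}f)\geq 0$, and then verify that inequality by writing $R_{r+\lambda}f$ as $R_r$ applied to a nonnegative function, via the resolvent equation. This matches the Riesz-representation philosophy used elsewhere in the paper.

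First I would observe that the quotient rule gives
\begin{equation*}
\frac{d}{dx}\!\left(\frac{u(x)}{\psi_r(x)}\right)=\frac{u'(x)\psi_r(x)-\psi'_r(x)u(x)}{\psi_r(x)^2}=-\frac{S'(x)\,(Lu)(x)}{\psi_r(x)^2},
\end{equation*}
using the definition \eqref{def: operator L} of $L$. Since $S'>0$ and $\psi_r>0$, the map $x\mapsto u(x)/\psi_r(x)$ is decreasing if and only if $(Lu)(x)\geq 0$. So setting $u=R_{r+\lambda}f$ the statement becomes $L(R_{r+\lambda}f)\geq 0$, and pulling out the positive constant $\lambda$ is harmless.

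Next I would apply the standard resolvent identity $R_{r+\lambda}=R_r-\lambda R_rR_{r+\lambda}$ to get
\begin{equation*}
R_{r+\lambda}f=R_r\tilde\sigma,\qquad\text{where }\tilde\sigma:=f-\lambda R_{r+\lambda}f.
\end{equation*}
The key point is that $\tilde\sigma\geq 0$: indeed, with $U\sim\Exp(\lambda)$ independent of $X$, the probabilistic meaning of the resolvent gives
\begin{equation*}
\lambda(R_{r+\lambda}f)(x)=\E_x\!\left[e^{-rU}f(X_U)\right]=\int_0^\infty\lambda e^{-\lambda t}\E_x\!\left[e^{-rt}f(X_t)\right]dt\leq f(x),
\end{equation*}
where the inequality uses the $r$-excessivity of $f$.

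The final step is to check that $L\circ R_r$ maps nonnegative functions to nonnegative functions. Plugging the integral representation \eqref{Resolvent integral representation} of $R_r\tilde\sigma$ into $L$ and differentiating, the two boundary contributions produced by differentiating the integrals cancel, leaving
\begin{equation*}
(R_r\tilde\sigma)'(x)=B_r^{-1}\!\left[\varphi'_r(x)\!\int_0^x\psi_r(y)\tilde\sigma(y)m'(y)\,dy+\psi'_r(x)\!\int_x^\infty\varphi_r(y)\tilde\sigma(y)m'(y)\,dy\right].
\end{equation*}
Substituting into $Lu$ and using the Wronskian identity $\psi'_r\varphi_r-\varphi'_r\psi_r=B_rS'$, the $\int_x^\infty$ term drops out and one is left with the clean expression
\begin{equation*}
L(R_r\tilde\sigma)(x)=\int_0^x\psi_r(y)\tilde\sigma(y)m'(y)\,dy\geq 0,
\end{equation*}
which together with Step 2 gives $L(R_{r+\lambda}f)\geq 0$ and completes the proof. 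The only mildly delicate part is the bookkeeping in this last calculation (making sure the boundary terms cancel correctly and the Wronskian eliminates the outer integral); everything else is either the resolvent identity or the probabilistic characterization of $r$-excessivity.
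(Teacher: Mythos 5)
Your Step 1 (the quotient-rule reduction to $L(R_{r+\lambda}f)\geq 0$) and your proof that $\lambda R_{r+\lambda}f\leq f$ via $\lambda(R_{r+\lambda}f)(x)=\mathbf{E}_x[e^{-rU}f(X_U)]$ are fine and coincide with the paper's use of $r$-excessivity. The genuine gap is in Step 2: the identity $R_{r+\lambda}f=R_r\bigl(f-\lambda R_{r+\lambda}f\bigr)$ does \emph{not} hold for a general continuous $r$-excessive $f$. The resolvent equation in the form valid for nonnegative $f$ is $R_rf=R_{r+\lambda}f+\lambda R_rR_{r+\lambda}f$, and rearranging it into your form requires $R_rf<\infty$; for $r$-excessive $f$ this typically fails. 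Concretely, take $X$ a geometric Brownian motion and $f=\psi_r$ (continuous, $r$-harmonic, hence $r$-excessive): then $\lambda R_{r+\lambda}\psi_r=\psi_r$, so your $\tilde\sigma\equiv 0$ and $R_r\tilde\sigma\equiv 0$, while $R_{r+\lambda}\psi_r=\psi_r/\lambda>0$. This is not a pathological edge case but exactly the regime in which the lemma is applied in Proposition \ref{prop:diffusion}: there $f=V^{i-1}_\lambda$, which dominates a multiple of $\psi_r$, and $R_r\psi_r=\infty$, so your rearranged resolvent identity is unavailable. What goes missing is precisely the $r$-harmonic part in the Riesz decomposition of $R_{r+\lambda}f$: in general $R_{r+\lambda}f=R_r\tilde\sigma+h$ with $h\geq 0$ $r$-harmonic, and your argument only controls the potential part.

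The gap is repairable: since $h\geq 0$ is $r$-harmonic, $h=c_1\psi_r+c_2\varphi_r$ with $c_1,c_2\geq 0$, hence $Lh=c_2B_r\geq 0$, which combined with your (correct) computation $L(R_r\tilde\sigma)(x)=\int_0^x\psi_r(y)\tilde\sigma(y)m'(y)\,dy\geq 0$ would restore $L(R_{r+\lambda}f)\geq 0$ --- but establishing and justifying this decomposition is exactly the step you omitted. Note that the paper sidesteps the issue by never representing $R_{r+\lambda}f$ as an $r$-resolvent: it differentiates $L(R_{r+\lambda}f)$ directly, using that $\psi_r$ is $r$-harmonic and $(\mathcal{A}-(r+\lambda))R_{r+\lambda}f=-f$, so that $(LR_{r+\lambda}f)'(x)=-\psi_r(x)\bigl(\lambda(R_{r+\lambda}f)(x)-f(x)\bigr)m'(x)\geq 0$, and then concludes by monotonicity of $LR_{r+\lambda}f$; you should either adopt that route or add the harmonic-part argument above.
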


\begin{proof}
Since the function $\psi_r$ is $r$-harmonic, a straightforward differentiation yields
\begin{align*}
(LR_{r+\lambda}f)'(x)&=-\psi_r(x)(\mathcal{A}-r)(R_{r+\lambda}f)(x)m'(x)
\\&=-\psi_r(x)(\lambda(R_{r+\lambda}f)(x)-f(x))m'(x)\geq 0,
\end{align*}
for all $x\in\mathbf{R}_+$. Here, the operator $L$ is defined in \eqref{def: operator L} and the last inequality is given by Prop. II.2.3 in \cite{BG}. Thus
\begin{align*}
\frac{d}{dx}\left(\frac{\lambda(R_{r+\lambda}f)(x)}{\psi_r(x)}\right)&=-\frac{S'(x)}{\psi_r^2(x)} \lambda(L (R_{r+\lambda}f))(x) \\&= \frac{S'(x)}{\psi_r^2(x)} \lambda \int_0^x \psi_r(y)(\lambda(R_{r+\lambda}f)(y)-f(y))m'(y)dy\leq 0.
\end{align*}
for all $x\in\mathbf{R}_+$.
\end{proof}

In what follows, we pin down sufficient conditions for the optimal stopping rules to be one-sided threshold rules.

\begin{proposition}\label{prop:diffusion}
Assume that
\begin{itemize}
\item the function $g \in C^2(\mathbf{R}_+ \setminus D)\cap C(\mathbf{R}_+)$ such that limits $lim_{x\rightarrow y\pm}g'(x)$ and $lim_{x\rightarrow y\pm}g''(x)$ are finite for all $y\in D$. Here, $D$ is a countable subset of $\mathbf{R}_+$ which has no accumulation points,
\item the function $x\mapsto\frac{g(x)}{\psi_r(x)}$ has a unique finite global maximum at $\hat{x}$ and is decreasing for all $x>\hat{x}$,
\item the function $(\mathcal{A}-r)g$ is decreasing for $x> \hat{x}_\infty$, where $\hat{x}_\infty:=\inf S^\infty$, see Remark \ref{remark:order}.
\end{itemize}
Then, for all $i=1,\dots,N$
\begin{itemize}
\item the function $x\mapsto\frac{H^i(x)}{\psi_r(x)}$ has a finite global maximum at a point $x^*_i\leq\hat{x}$,
\item $S_i=[x_i^*,\infty)$ and the optimal stopping times $\tau_1,\dots,\tau_N$ read as
\[ \tau^*_i = \inf\{ t \geq \tau^*_{i-1}+U_{i-1} : X_t\geq x^*_{N-i+1} \} \]
\item the value functions read as
\begin{equation}
V^{i}_\lambda(x) =
\begin{cases}
H^{i}(x), & x \geq x^*_{{i}} \\
\frac{H^{i}(x^*_{i})}{\psi_r({x^*_{i}})} \psi_r(x), & x \leq x^*_{i},
\end{cases}
\end{equation}
\end{itemize}
where the function $H^i$ is defined in \eqref{def: payoff H}.
\end{proposition}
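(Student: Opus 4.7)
The plan is to proceed by induction on $i \in \{1, \dots, N\}$, exploiting Theorem \ref{thm:main} (which identifies $V^i_\lambda$ as the least $r$-excessive majorant of $H^i$) together with the sign/monotonicity calculus packaged in Lemma \ref{Operator L Lemma}. For the base case $i=1$ we have $V^0_\lambda \equiv 0$, hence $H^1 = g$, so $V^1_\lambda$ is the value of the ordinary single-stopping problem with payoff $g$. Under the first hypothesis on $g/\psi_r$, classical one-dimensional optimal stopping theory (Beibel--Lerche, Alvarez) furnishes $x_1^* = \hat{x}$, stopping set $[\hat{x}, \infty)$, and $V^1_\lambda$ in the claimed piecewise form.

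For the inductive step, I would assume the conclusion through $i-1$; in particular, $V^{i-1}_\lambda$ is continuous, nonnegative, and $r$-excessive (the latter since any least $r$-excessive majorant is itself $r$-excessive). Applying Lemma \ref{Operator L Lemma} with $f = V^{i-1}_\lambda$, the map
\[x \mapsto \frac{\lambda (R_{r+\lambda} V^{i-1}_\lambda)(x)}{\psi_r(x)}\]
is decreasing on $\mathbf{R}_+$. Adding $g/\psi_r$, which decreases on $(\hat{x}, \infty)$ by hypothesis, gives that $H^i/\psi_r$ decreases on $(\hat{x}, \infty)$. Hence any global maximum $x_i^*$ of $H^i/\psi_r$ must satisfy $x_i^* \leq \hat{x}$; existence follows from continuity of $H^i$ together with the boundedness afforded by (A2) and the decay of the two summands at the boundaries.

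By Theorem \ref{thm:main}, $V^i_\lambda$ equals the least $r$-excessive majorant of $H^i$. Given that $H^i/\psi_r$ attains a global maximum at $x_i^*$, I would then invoke the standard ``tangent construction'' for diffusions: provided $H^i$ is itself $r$-superharmonic on $[x_i^*, \infty)$, the least $r$-excessive majorant agrees with $H^i$ on $[x_i^*, \infty)$ and with $(H^i(x_i^*)/\psi_r(x_i^*))\psi_r$ on $(0, x_i^*]$. This produces the stated formula for $V^i_\lambda$ and identifies $S_i = [x_i^*, \infty)$; the form of $\tau_i^*$ is then immediate from Theorem \ref{thm:main}.

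The principal obstacle is verifying that $(\mathcal{A} - r) H^i \leq 0$ on $[x_i^*, \infty)$ in the last step. Decomposing
\[(\mathcal{A}-r)H^i = (\mathcal{A}-r)g + \lambda^2 R_{r+\lambda}V^{i-1}_\lambda - \lambda V^{i-1}_\lambda,\]
the final two terms are nonpositive by $r$-excessivity of $V^{i-1}_\lambda$, so it suffices to control $(\mathcal{A}-r)g$ on $[x_i^*, \infty)$. This is precisely where the third hypothesis (monotone decrease of $(\mathcal{A}-r)g$ on $(\hat{x}_\infty, \infty)$) combines with the ordering $\hat{x}_\infty \leq x_i^*$ coming from Remark \ref{remark:order}, together with the first-order condition $(H^i/\psi_r)'(x_i^*) = 0$ and the identity $(Lg)' = -\psi_r (\mathcal{A}-r) g\, m'$ from the proof of Lemma \ref{Operator L Lemma}, to produce the required sign. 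Making this chain of bounds airtight --- in particular, pinning down that the base sign at $x_i^*$ is nonpositive rather than merely showing that $(\mathcal{A}-r)H^i$ is eventually decreasing --- is the delicate part of the argument, and is where the three hypotheses become tightly coupled.
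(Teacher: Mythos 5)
Your setup (induction, Lemma \ref{Operator L Lemma}, the reduction via Theorem \ref{thm:main}, and the observation that $H^i/\psi_r$ decreases beyond $\hat{x}$ so that any maximizer satisfies $x_i^*\leq\hat{x}$) matches the paper, but the decisive step is missing: you never actually establish $S^i=[x_i^*,\infty)$ or the value formula, and the route you sketch for it cannot work as stated. You propose to verify $(\mathcal{A}-r)H^i\leq 0$ on all of $[x_i^*,\infty)$ by discarding the nonpositive term $\lambda(\lambda R_{r+\lambda}V^{i-1}_\lambda-V^{i-1}_\lambda)$ and controlling $(\mathcal{A}-r)g$ alone. But the hypothesis only says that $(\mathcal{A}-r)g$ is \emph{decreasing} on $(\hat{x}_\infty,\infty)$; it is in general strictly positive on part of $[x_i^*,\hat{x}]$. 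Indeed $x_i^*<\hat{x}=x_1^*$ lies inside the continuation region of the one-exercise problem, where stopping on $g$ alone is suboptimal precisely because $(\mathcal{A}-r)g$ may still be positive there (for the call $g(x)=(x-K)^+$ under geometric Brownian motion, $(\mathcal{A}-r)g>0$ on $(K,rK/(r-\mu))$, and $\hat{x}_\infty$ falls below this zero for large $\lambda$). Early stopping at $x_i^*$ is justified only by the continuation-value term, so that term must be used quantitatively, not bounded by zero; your reduction therefore cannot close the argument, and you yourself flag that the chain of bounds is not airtight. Even granting superharmonicity of $H^i$ on $[x_i^*,\infty)$, the tangent construction would still need care at the pasting point and at the natural boundary at infinity.

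The paper closes this step by a different mechanism, and the difference is essential. It obtains $[x_{i-1}^*,\infty)=S^{i-1}\subseteq S^i$ for free from the nesting of stopping sets for multiple stopping (Remark \ref{remark:order}), obtains $S^i\cap(0,x_i^*)=\emptyset$ from the Beibel--Lerche/Christensen--Irle characterization applied at the smallest maximum point of $H^i/\psi_r$, and then only has to treat the intermediate interval $[x_i^*,x_{i-1}^*]$: there it shows that $H^i/\psi_r$ is non-increasing, using the representation $\frac{d}{dx}\bigl(H^i/\psi_r\bigr)(x)=\frac{S'(x)}{\psi_r^2(x)}\int_0^x\psi_r(y)(\mathcal{A}-r)H^i(y)m'(y)\,dy$, the hypothesis that $(\mathcal{A}-r)g$ decreases past $\hat{x}_\infty$, and an explicit resolvent computation showing that $\lambda R_{r+\lambda}V^{i-1}_\lambda-V^{i-1}_\lambda$ has nonpositive derivative below $x_{i-1}^*$; a two-point contradiction argument with the functions $\lambda\varphi_r+(1-\lambda)\psi_r$ then rules out any gap of the stopping set inside $[x_i^*,x_{i-1}^*]$. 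These ingredients --- the nesting $S^{i-1}\subseteq S^i$, the monotonicity of $H^i/\psi_r$ on $[x_i^*,x_{i-1}^*]$ only (rather than global superharmonicity of $H^i$), and the no-gap argument --- are exactly what your proposal lacks.
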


\begin{proof} Since $V_\lambda^0=0$, the claim follows immediately for $i=1$ by \cite[Theorem 3]{Al01}. We proceed by induction: assume that the claim holds for index $i-1$.
\begin{enumerate}
\item
We know from the standard theory of optimal stopping, see, e.g., \cite{ps}, that the value function $V_{\lambda}^{i-1}$ is finite and $r$-excessive. By Prop. II.2.3 in \cite{BG}, this implies that $V_\lambda^{i-1}(x)\geq\lambda(R_{r+\lambda}V_\lambda^{i-1})(x)$ for all $x\in\mathbf{R}_+$. Thus
\begin{align}\label{H^i computation}
\frac{H^i(x)}{\psi_r(x)}\leq \frac{g(x)+V_\lambda^{i-1}(x)}{\psi_r(x)} \leq \frac{g(\hat{x})}{\psi_r(\hat{x})}+\frac{H^{i-1}(x^*_{i-1})}{\psi_r(x^*_{i-1})},
\end{align}
for all $x\in\mathbf{R}_+$, that is, the function $x\mapsto\frac{H^i(x)}{\psi_r(x)}$ is bounded. In addition, we know from Lemma \ref{Operator L Lemma} that the function $x\mapsto\frac{\lambda(R_{r+\lambda}V_\lambda^{i-1})(x)}{\psi_r(x)}$ is decreasing. Thus the function $x\mapsto\frac{H^i(x)}{\psi_r(x)}$ is decreasing for all $x>\hat{x}$.
Since the interval $(0,\hat{x}_\infty)$ is a subset of the continuation region for all $i=1,\dots,N$ (see Remark \ref{remark:order}), we can assume, without loss of generality, that $g(x)=0$ for all $x\in(0,\hat{x}_\infty)$. Thus we observe using \eqref{H^i computation} that $\frac{H^i(x)}{\psi_r(x)}\leq\frac{H^{i-1}(x^*_{i-1})}{\psi_r(x^*_{i-1})}\leq\frac{H^i(x^*_{i-1})}{\psi_r(x^*_{i-1})}$ for all $x\in(0,\hat{x}_\infty)$. By continuity, we conclude that the function $x\mapsto\frac{H^i(x)}{\psi_r(x)}$ has at least one maximum point on $(\hat{x}_\infty,\hat{x})$. Let $x_i^*$ denote the smallest maximum point.
\item By \cite[Lemma 1.1]{CI2}, we know that $x_i^*\in S^i$. By \cite[Theorem 2]{BL00} we furthermore know that $\inf\{t\geq0:X_t=x_i^*\}$ is an optimal stopping time for all starting points in $(0,x_i^*)$ and by the minimality of\ $x_i^*$ we obtain that $S^i\subseteq [x_{i}^*,\infty)$. On the other hand, by Remark
\ref{remark:order} it holds that $[x_{i-1}^*,\infty)=S^{i-1}\subseteq S^i$.

\item We now show that $x\mapsto\frac{H^i(x)}{\psi_r(x)}$ is non-increasing on $[x_i^*,x_{i-1}^*]$. To this end, consider the function
\begin{equation*}
I^i(x)=\frac{\psi_r'(x)}{S'(x)}H^i(x) - \frac{{H^i}'(x)}{S'(x)}\psi_r(x).
\end{equation*}
Since $\psi_r$ is $r$-harmonic, straightforward differentiation yields
\begin{equation*}
{I^i}'(x)=-\psi_r(x)(\mathcal{A}-r)H^i(x)m'(x),
\end{equation*}
for all $x\in\mathbf{R}_+$. Since the boundaries are natural, we find that $I^i(0)=0$ and, consequently, that
\begin{equation*}
{I^i}(x)=-\int_0^x \psi_r(y)(\mathcal{A}-r)H^i(y)m'(y) dy.
\end{equation*}
On the other hand, since
\begin{equation*}
I^i(x)=-\frac{\psi^2_r(x)}{S'(x)}\frac{d}{dx}\left( \frac{H^i(x)}{\psi_r(x)} \right),
\end{equation*}
we conclude
\begin{equation}\label{Diffusion proof H/psi derivative}
\frac{d}{dx}\left( \frac{H^i(x)}{\psi_r(x)} \right) = \frac{S'(x)}{\psi^2_r(x)}\int_0^x \psi_r(y)(\mathcal{A}-r)H^i(y)m'(y) dy.
\end{equation}
We know that the resolvent $(R_{r+\lambda}V^{i-1}_\lambda)$ satisfies the relation $(\mathcal{A}-(r+\lambda))(R_{r+\lambda}V^{i-1}_\lambda)=-V^{i-1}_\lambda$. Thus
\begin{equation}\label{Diffusion proof A-r H}
(\mathcal{A}-r)H^i(x) = (\mathcal{A}-r)g(x)+\lambda(\lambda(R_{r+\lambda}V^{i-1}_\lambda)(x)-V^{i-1}_\lambda(x)).
\end{equation}
Since the boundary $\infty$ is natural, it follows from Lemma 2.1 in \cite{Lempa02} that
\begin{equation*}
V^{i-1}(x)=\frac{H^{i-1}(x^*_{i-1})}{\psi_r(x^*_{i-1})}\psi_r(x)= \lambda\left( R_{r+\lambda} \frac{H^{i-1}(x^*_{i-1})}{\psi_r(x^*_{i-1})}\psi_r \right)(x)
\end{equation*}
for all $x\leq x^*_{i-1}$. By invoking the representation \eqref{Resolvent integral representation}, we find that
\begin{align*}
\lambda&(R_{r+\lambda}V^{i-1}_\lambda)'(x)-{V^{i-1}_\lambda}'(x)\\&=\frac{\lambda}{B_{r+\lambda}}\left(\varphi_{r+\lambda}'(x)\int_0^x \psi_{r+\lambda}(y)V^{i-1}_\lambda(y)m'(y)dy+\psi_{r+\lambda}'(x)\int_x^\infty \varphi_{r+\lambda}(y)V^{i-1}_\lambda(y)m'(y)dy \right) \\&-
\frac{\lambda}{B_{r+\lambda}}\frac{H^{i-1}(x^*_{i-1})}{\psi_r(x^*_{i-1})}\left(\varphi_{r+\lambda}'(x)\int_0^x \psi_{r+\lambda}(y)\psi_r(y)m'(y)dy+\psi_{r+\lambda}'(x)\int_x^\infty \varphi_{r+\lambda}(y)\psi_r(y)m'(y)dy \right) \\& \leq 0.
\end{align*}
Thus, we find using \eqref{Diffusion proof A-r H} that the integrand in \eqref{Diffusion proof H/psi derivative} is non-increasing on $(\hat{x}_\infty,x_{i-1}^*)$. Therefore the function is $x\mapsto\frac{H^i(x)}{\psi_r(x)}$ is non-increasing on $[x_i^*,x_{i-1}^*]$.
\item To prove that $S^i=[x_i^*,\infty)$ it remains to be shown that $[x_i^*,x_{i-1}^*]\subseteq S^i$. To this end, assume that there exist $y_1<y_2\in[x^*_i,x^*_{i-1}]$ such that $(y_1,y_2)\not\in S^i$. By  \cite[Theorem 3]{BL00} (or also \cite{CI2}), there exists $\lambda\in(0,1)$ such that $y_1,y_2$ are maximum points of $\frac{H^i}{\lambda \varphi_r+(1-\lambda)\psi_r}$. Without loss of generality, we standardize the functions $\psi_r$ $\varphi_r$ such that $\varphi_r(y_1)=\psi_R(y_1)$. Then - using that $H^i/\psi_r$ is non-increasing on $[x^*_{i-1},x^*_i]$ and $\varphi_r(y_2)<\psi_r(y_2)$ - we obtain
\begin{align*}
\frac{H^i}{\lambda \varphi_r+(1-\lambda)\psi_r}(y_1)&=\frac{H^i}{\varphi_r}(y_1)\geq\frac{H^i}{\varphi_r}(y_2)>\frac{H^i}{\lambda \varphi_r+(1-\lambda)\psi_r}(y_2),
\end{align*}
which is a contradiction.
\end{enumerate}
Now, all other claims hold by the general theory of optimal stopping.
\end{proof}

Proposition \ref{prop:diffusion} 
is formulated for increasing payoff satisfying additional regularity conditions. We point out that one can formulate an analogous result also for decreasing payoffs in terms of the ratio function $x\mapsto \frac{g(x)}{\varphi_r(x)}$ and obtain another wide class of solvable optimal multiple stopping problems with one-sided optimal threshold rules -- for analogous results in optimal single stopping, see, e.g., \cite{Lempa} and \cite{Lempa02}.

\subsection{L\'evy processes}\label{subsec:LP}
For L\'evy processes $X$, we consider the resolvent kernel $G_r$ given by
\[G_r(x,y)dy=\int_0^\infty e^{-rt}\mathbf{P}_x(X_t\in dy)dt.\]
Assuming that $M$ and $I$ are independent random variables with distributions of $\sup_{t\leq T}X_t$ and $\inf_{t\leq T}X_t$, respectively, where $T$ is an independent exponential time with parameter $r$, the Wiener-Hopf-factorization (see \cite[Theorem 6.16]{Kyp}) states that
\[X_T\stackrel{d}{=}M+I.\]
Therefore,
\[rG_r(x,y)=\begin{cases}
\int_{-\infty}^{y-x}f_I(t)f_M(y-x-t)dt,&y<x\\
\int^{\infty}_{y-x}f_M(t)f_I(y-x-t)dt,&y>x,
\end{cases}\]
where $f_I$, $f_M$ denote the densities of $I$ and $M$, that we assume to exist.\\
Unfortunately, these densities are often not easy to find explicitly, so that we consider the particularly interesting class of spectrally negative L\'evy processes now. In this case, the resolvent kernel $G_r$ can be given semi-explicitly in terms of the scale-function $W^{(r)}$ of the process and the right inverse $\Phi$ of the Laplace exponent as
 \begin{align}\label{eq:green_levy}
 G_r(x,y)=\Phi'(r)e^{-\Phi(r)(y-x)}-W^{(r)}(x-y)
 \end{align}
and $x^*$, $\sigma$ are given as above, see \cite[Corollary 8.9]{Kyp}. The resolvent is then given by
\[ (R_rf)(x)=\int_{\mathbb{R}}f(y)G_r(x,y)dy.\]
This representation can be used as a key for solving multiple stopping problems for underlying L\'evy processes in our setting.
Note that for each spectrally positive L\'evy process $X$, the process $-X$ is a spectrally negative L\'evy process, so that the analogous formulas can be used for spectrally positive L\'evy processes also.

To give a basis for obtaining explicit examples, we again concentrate on examples that lead to optimal stopping times of threshold-type. As a main tool, we use the theory developed in \cite{CST}.

\begin{proposition}\label{prop:levy}
Let $X$ be a spectrally positive L\'evy process and let $g$ be such that there exists a continuous function $\tilde{f}$ such that
\begin{itemize}
\item $g(x)=R_r\tilde{f}(x)$ for all $x$,\footnote{under appropriate smoothness assumptions on $g$, this means that $\tilde{f}=(r-\mathcal{A})g$}
\item the function $\tilde{f}$ is non-decreasing. 
\end{itemize}
Then for each $N\in\N$, there exists $\hat{x}_N\geq \hat{x}_\infty$ such that $S^N=[x_N^*,\infty)$ and the value function has the form
$V^N=R_r\sigma^N$ for some continuous non-decresing function $\sigma^N$, which fulfills the recursive equation
\[\sigma^N=\tilde{f}+\lambda R_{r+\lambda}\sigma^{N-1}\mbox{ on }[x_N^*,\infty)\mbox{ and }\sigma^N=0\mbox{ on }(-\infty,x_N^*].\]
\end{proposition}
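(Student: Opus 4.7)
The plan is to induct on $N$ and, at each step, reduce the problem to an ordinary single-exercise stopping problem for which \cite[Proposition~2.16]{CST} is directly applicable. The crucial algebraic observation, which is what makes the induction close cleanly, is that whenever $V^{N-1}$ admits a Riesz form $R_r\sigma^{N-1}$, the payoff $H^N$ of Theorem~\ref{thm:main} is again an $r$-potential whose density is obtained explicitly from the resolvent equation.

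The base case $N=1$ is immediate: since $V^0=0$, one has $H^1=g=R_r\tilde f$ with $\tilde f$ continuous and non-decreasing by assumption, so \cite[Proposition~2.16]{CST} yields a threshold $x_1^*\geq\hat{x}_\infty$, $S^1=[x_1^*,\infty)$, and $V^1=R_r\sigma^1$ with $\sigma^1=\tilde f\,\mathbf 1_{[x_1^*,\infty)}$, in agreement with the stated recursion (using $\sigma^0=0$).

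For the inductive step, suppose $V^{N-1}=R_r\sigma^{N-1}$ for a continuous non-decreasing $\sigma^{N-1}$. Using the commutativity of resolvents together with $g=R_r\tilde f$, we rewrite the payoff from Theorem~\ref{thm:main} as
\[
H^N \;=\; g+\lambda R_{r+\lambda}V^{N-1} \;=\; R_r\tilde f+\lambda R_r R_{r+\lambda}\sigma^{N-1} \;=\; R_r\rho^N,\qquad \rho^N:=\tilde f+\lambda R_{r+\lambda}\sigma^{N-1}.
\]
Since $X$ is a L\'evy process, its transition semigroup commutes with spatial translations, so $R_{r+\lambda}$ maps non-decreasing functions to non-decreasing functions (and preserves continuity under the standard regularity of the resolvent kernel \eqref{eq:green_levy}); thus $\rho^N$ is continuous and non-decreasing. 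Applying \cite[Proposition~2.16]{CST} to the single stopping problem with payoff $H^N=R_r\rho^N$ then produces a threshold $x_N^*$, stopping region $S^N=[x_N^*,\infty)$, and the identification
\[
V^N=R_r\sigma^N,\qquad \sigma^N=\rho^N\,\mathbf 1_{[x_N^*,\infty)}=\bigl(\tilde f+\lambda R_{r+\lambda}\sigma^{N-1}\bigr)\mathbf 1_{[x_N^*,\infty)},
\]
which is precisely the stated recursion. Continuity of $\sigma^N$ (needed to propagate the induction) follows from $\rho^N(x_N^*)=0$, which is the characterization of the threshold in \cite[Proposition~2.16]{CST}; its monotonicity follows since $\sigma^N$ vanishes on $(-\infty,x_N^*]$ and equals the non-decreasing $\rho^N$ on $[x_N^*,\infty)$. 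The inequality $x_N^*\geq\hat{x}_\infty$ is then immediate from Remark~\ref{remark:order}, which gives $S^N\subseteq S^\infty=[\hat{x}_\infty,\infty)$.

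The main technical obstacle is the careful verification that the hypotheses of \cite[Proposition~2.16]{CST} are met at every step of the induction; specifically, that $\rho^N$ belongs to the appropriate integrability class and actually changes sign at a finite $x_N^*$, without which the threshold would be trivial and the continuity of $\sigma^N$ at $x_N^*$ could fail. Establishing the required integrability reduces to quantitative control of $R_{r+\lambda}\sigma^{N-1}$ from the inductive assumption on $\sigma^{N-1}$ together with the scale-function representation \eqref{eq:green_levy} of the resolvent kernel.
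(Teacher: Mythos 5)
Your route is essentially the paper's own: induction on $N$, the resolvent-equation identity $H^N=g+\lambda R_{r+\lambda}V^{N-1}=R_r\bigl(\tilde f+\lambda R_{r+\lambda}\sigma^{N-1}\bigr)$ obtained by commuting $R_r$ and $R_{r+\lambda}$, preservation of continuity and monotonicity by the L\'evy resolvent (translation invariance), an appeal to \cite{CST} for the one-sided structure and the Riesz/resolvent representation, and Remark \ref{remark:order} for $x_N^*\geq \hat{x}_\infty$. Up to the final step the argument matches the paper's proof almost line by line.

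The concrete problem is in how you extract the threshold structure and the continuity of $\sigma^N$ from \cite{CST}. Proposition 2.16 there supplies the representation $V^N=R_r\sigma^N$; the fact that $S^N$ is a half-line $[x_N^*,\infty)$ comes from Theorem 2.5, whose hypotheses the paper verifies not for $\rho^N$ itself but for the averaged function $\hat f(z)=\frac1r\int_{-\infty}^0\rho^N(z+y)\,\mathbf{P}_0(I_T\in dy)$, with $I_T$ the infimum of $X$ at an independent exponential time; this (easy, since monotonicity of $\rho^N$ passes to $\hat f$) verification is missing from your proof, and Proposition 2.16 alone does not deliver the shape of $S^N$. More importantly, the optimal threshold is characterized by the sign change of this averaged function $\hat f$, not by $\rho^N(x_N^*)=0$: in general the density does not vanish at the optimal boundary. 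The paper's own call example makes this explicit: in Subsection \ref{subsec:call_brown} the density is $(r+\lambda-\mu)x-K(r+\lambda)$ on the stopping set, which at the threshold $\hat{x}_\infty=\frac{\beta}{\beta-1}K$ equals $\tfrac12\sigma^2\beta K>0$. Hence your justification of the continuity of $\sigma^N$ at $x_N^*$ (and your closing worry that $\rho^N$ must ``change sign at a finite $x_N^*$'') rests on a false characterization and does not go through; what the induction actually needs, and what the paper's proof uses, is only that $\sigma^N$ equals the monotone function $\rho^N=\tilde f+\lambda R_{r+\lambda}\sigma^{N-1}$ on $[x_N^*,\infty)$ and $0$ below, as delivered by Proposition 2.16, without any matching of the two pieces at the boundary point.
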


\begin{proof}
We proceed by induction on $N$ and assume that
\[S^{N-1}=[x^*_{N-1},\infty),\;\;V^{N-1}=R_r\sigma^{N-1},\]
where $\sigma^{N-1}$ is continuous and non-decreasing. Note that $\sigma^{N-1}(x)=0$ for $x\leq x^*_{N-1}$. By Theorem \ref{thm:main}, we are faced with the optimal stopping problem with reward function
\[g+\lambda R_{r+\lambda}V^{N-1}=R_r\tilde{f}+ \lambda R_{r+\lambda}R_r\sigma^{N-1}=R_r\tilde{f}^N,\]
where $\tilde{f}^N=\tilde{f}+\lambda R_{r+\lambda}\sigma^{N-1}$. $\tilde{f}^N$ is also continuous and non-decreasing. Writing
\[\hat{f}(z)=\frac{1}{r}\int_{-\infty}^0\tilde{f}^N(z+y)\mathbb{P}_0(I_T\in dy),\]
we immediately see that $\hat{f}$ fulfills the assumptions of \cite[Theorem 2.5]{CST}, i.e. $S^N$ is of the form $[x_N^*,\infty)$ for some $x_N^*$, and \cite[Proposition 2.16]{CST} yields the desired resolvent-representation for $V^N$.
\end{proof}

\begin{remark}
The arguments in the previous proof were not directly based on the special structure of the process $X$. The arguments can be generalized to more general real-valued Hunt processes with only positive jumps under appropriate additional assumptions following the results in \cite{CST} carefully.
\end{remark}

\section{An Illustration of the finite stopping problem}\label{sec:call}

To illustrate our results on the finite stopping problem, we come back to the problem discussed in Subsection \ref{subsec:call_brown} and \ref{subsec:call_levy} for $N=\infty$. For general underlying geometric spectrally positive L\'evy processes, it is clear from the discussion in Subsection \ref{subsec:call_levy} that
\[e^x-K=R_r\tilde{f}(x),\]
with $\tilde{f}(x)=c_1e^x-c_2,$ where $c_1,c_2$ are given in \eqref{eq:sigma_levy}. From this explicit representation, it can immediately be seen that the assumptions of Proposition \ref{prop:levy} are fulfilled and that the optimal stopping times are one-sided. To obtain more explicit results, we now consider the case where the underlying process $X$ follows a geometric Brownian motion and apply the results described in Subsection \ref{subsec:diff_one_sided}. The scale density $S'$ reads as $S'(x)=x^{-\frac{2\mu}{\sigma^2}}$ and the speed density $m'$ reads as $m'(x)=\frac{2}{(\sigma x)^2} x^{\frac{2\mu}{\sigma^2}}$. It is well known that the differential operator $\mathcal{A}=\frac{1}{2}\sigma^2x^2\frac{d^2}{dx^2}+\mu x \frac{d}{dx}$. For the sake of finiteness, we assume that $\mu<r$ and $\mu-\frac{1}{2}\sigma^2>0$. This guarantees that the optimal exercise thresholds are finite and are attained almost surely in finite time. The minimal excessive functions $\psi_\cdot$ and $\varphi_\cdot$ can be written as
\begin{displaymath}
\begin{cases}
\psi_r(x)=x^b, \\ \varphi_r(x)=x^a,
\end{cases}
\begin{cases}
\psi_{r+\lambda}(x)=x^\beta, \\ \varphi_{r+\lambda}(x)=x^\alpha,
\end{cases}
\end{displaymath}
where the constants
\begin{displaymath}
\begin{cases}
b=\left(\frac{1}{2}-\frac{\mu}{\sigma^2} \right)+\sqrt{\left(\frac{1}{2}-\frac{\mu}{\sigma^2} \right)^2+\frac{2r}{\sigma^2}}>1, \\
a=\left(\frac{1}{2}-\frac{\mu}{\sigma^2} \right)-\sqrt{\left(\frac{1}{2}-\frac{\mu}{\sigma^2} \right)^2+\frac{2r}{\sigma^2}}<0, \\
\end{cases}
\begin{cases}
\beta=\left(\frac{1}{2}-\frac{\mu}{\sigma^2} \right)+\sqrt{\left(\frac{1}{2}-\frac{\mu}{\sigma^2} \right)^2+\frac{2(r+\lambda)}{\sigma^2}}>1, \\
\alpha=\left(\frac{1}{2}-\frac{\mu}{\sigma^2} \right)-\sqrt{\left(\frac{1}{2}-\frac{\mu}{\sigma^2} \right)^2+\frac{2(r+\lambda)}{\sigma^2}}<0. \\
\end{cases}
\end{displaymath}
It is a simple computation to show that the Wronskian $B_{r+\lambda}=\frac{}{}2\sqrt{\left(\frac{1}{2}-\frac{\mu}{\sigma^2} \right)^2+\frac{2(r+\lambda)}{\sigma^2}}$.

Assume that the exercise payoff reads as $g(x)=(x-K)^+$, where $K$ is a fixed strike price. Furthermore, fix the parameter $\lambda>0$. Then the optimal multiple stopping problem reads as
\[ V^N_\lambda(x)=\sup_{\bar{\tau}} \mathbf{E}\left\{ \sum_{i=1}^N e^{-r\tau_i} (X_{\tau_i}-K)^+ \right\}, \]
for all $i=1,\dots,N$. It is known from the literature that for $N=1$, the optimal stopping threshold and the value read as
\begin{equation*}
x_1^*=\frac{b}{b-1}K, \quad V^1_\lambda(x) =
\begin{cases}
x-K, & x \geq x^*_1 \\
\frac{x^*_1-K}{{x^*_1}^b} x^b, & x \leq x^*_1.
\end{cases}
\end{equation*}

To characterize the thresholds for $i=2,\dots,N$, the objective is to find the state
\[ x^*_{i} =\argmax \left( x\mapsto \frac{H^i(x)}{x^b} \right). \]
This yields the necessary condition
\begin{equation}\label{GBM: Necessary condition}
\left[-\frac{d}{dx}\left(\frac{\lambda(R_{r+\lambda}V^{i-1}_\lambda)(x)}{x^b}\right)\right]_{x=x^*_{i}} = \left[\frac{d}{dx}\left(\frac{(x-K)^+}{x^b}\right)\right]_{x=x^*_{i}}.
\end{equation}
Inductively, we assume that
\begin{equation*}
x^*_{i-1} =\argmax \left( x\mapsto \frac{H^{i-1}(x)}{x^b} \right), \quad V^{i-1}_\lambda(x) =
\begin{cases}
H^{i-1}(x), & x \geq x^*_{i-1} \\
\frac{H^{i-1}(x^*_{i-1})}{{x^*_{i-1}}^b} x^b, & x \leq x^*_{i-1}.
\end{cases}
\end{equation*}
For brevity, denote $c^*_{i-1}=\frac{H^{i-1}(x^*_{i-1})}{{x^*_{i-1}}^b}$. Let $x\leq x^*_{i-1}$. Then we find that
\begin{align*}
\lambda(R_{r+\lambda}V^{i-1}_\lambda)(x)=
\frac{2\lambda}{\sigma^2B_{r+\lambda}}&\left[ x^\alpha \int_0^x y^\beta c^*_{i-1} y^b y^{\frac{2\mu}{\sigma^2}-2}dy+x^\beta \int_x^{x^*_{i-1}} y^\alpha c^*_{i-1} y^b y^{\frac{2\mu}{\sigma^2}-2}dy \right.
\\  &\left.+ x^\beta \int_{x^*_{i-1}}^\infty y^\alpha H^{i-1}(y) y^{\frac{2\mu}{\sigma^2}-2}dy   \right].
\end{align*}
Integration by parts yields
\begin{align*}
\lambda(R_{r+\lambda}V^{i-1}_\lambda)(x)=
\frac{2\lambda}{\sigma^2B_{r+\lambda}}& \left[ c^*_{i-1}\frac{\kappa+\gamma}{\kappa\gamma}x^b + x^\beta \frac{1}{\kappa}\int_{x^*_{i-1}}^\infty y^{-\kappa} \frac{d}{dy}\left( \frac{H^{i-1}(y)}{y^b} \right) dy \right]\\
= V^{i-1}_\lambda(x)&+x^\beta \frac{\gamma}{\gamma+\kappa}\int_{x^*_{i-1}}^\infty y^{-\kappa} \frac{d}{dy}\left( \frac{H^{i-1}(y)}{y^b} \right) dy,
\end{align*}
where
\begin{equation*}
\begin{cases}
\kappa=\sqrt{\left(\frac{1}{2}-\frac{\mu}{\sigma^2} \right)^2+\frac{2(r+\lambda)}{\sigma^2}}-\sqrt{\left(\frac{1}{2}-\frac{\mu}{\sigma^2} \right)^2+\frac{2r}{\sigma^2}}, \\
\gamma=\sqrt{\left(\frac{1}{2}-\frac{\mu}{\sigma^2} \right)^2+\frac{2(r+\lambda)}{\sigma^2}}+\sqrt{\left(\frac{1}{2}-\frac{\mu}{\sigma^2} \right)^2+\frac{2r}{\sigma^2}}.
\end{cases}
\end{equation*}
On the other hand, let $x\geq x^*_{i-1}$. Then
\begin{align*}
\lambda(R_{r+\lambda}V^{i-1}_\lambda)(x)=
\frac{2\lambda}{\sigma^2B_{r+\lambda}}&\left[ x^\alpha \int_0^{x^*_{i-1}} y^\beta c^*_{i-1} y^b y^{\frac{2\mu}{\sigma^2}-2}dy+  x^\alpha \int_{x^*_{i-1}}^x y^\beta H^{i-1}(y) y^{\frac{2\mu}{\sigma^2}-2}dy \right.
\\  &\left.+  x^\beta \int_{x}^\infty y^\alpha H^{i-1}(y) y^{\frac{2\mu}{\sigma^2}-2}dy \right].
\end{align*}
Again, integration by parts yields
\begin{align*}
\lambda(R_{r+\lambda}V^{i-1}_\lambda)(x)
=\frac{2\lambda}{\sigma^2B_{r+\lambda}}&\left[ x^\alpha c^*_{i-1}\frac{1}{\gamma}{x^*_{i-1}}^\gamma +x^\alpha\int_{x^*_{i-1}}^x y^{\gamma-1}\frac{H^{i-1}(y)}{y^b} dy  \right.
\\& \left.+x^{\beta} \int_x^\infty y^{-\kappa-1}\frac{H^{i-1}(y)}{y^b} dy  \right].\\
=\frac{2\lambda}{\sigma^2B_{r+\lambda}}&\left[ x^\alpha c^*_{i-1}\frac{1}{\gamma}{x^*_{i-1}}^\gamma  + \frac{1}{\gamma}x^\alpha\left(x^\gamma\frac{H^{i-1}(x)}{x^b}-{x^*_{i-1}}^\gamma\frac{H^{i-1}({x^*_{i-1}})}{{x^*_{i-1}}^b}\right) \right. \\
& \left.- \frac{1}{\gamma}x^\alpha\int_{x^*_{i-1}}^x y^\gamma\frac{d}{dy}\left(\frac{H^{i-1}(y)}{y^b}\right)dy\right.\\
&+\left.\frac{1}{\kappa}x^\beta\left( x^{-\kappa}\frac{H^{i-1}(x)}{x^b} + \int_x^\infty y^{-\kappa}\frac{d}{dy}\left(\frac{H^{i-1}(y)}{y^b}\right)dy \right)\right]\\
=H^{i-1}(x)+\frac{2\lambda}{\sigma^2B_{r+\lambda}}&\left[\frac{1}{\kappa}x^\beta\int_x^\infty y^{-\kappa}\frac{d}{dy}\left(\frac{H^{N-1}(y)}{y^b}\right)dy\right. \\ &- \left.\frac{1}{\gamma}x^\alpha\int_{x^*_{i-1}}^x y^\gamma\frac{d}{dy}\left(\frac{H^{N-1}(y)}{y^b}\right)dy\right].
\end{align*}
Summarizing, a round of differentiation yields
\begin{align}\label{GBM: resolvent reduction}
\frac{d}{dx}\left(\frac{\lambda(R_{r+\lambda}V^{i-1}_\lambda)(x)}{x^b}\right)=\frac{\kappa\gamma}{\kappa+\gamma}&\left(x^{\kappa-1}\int_{\max\{x, x^*_{i-1}\}}^\infty y^{-\kappa} \frac{d}{dy}\left( \frac{H^{i-1}(y)}{y^b} \right) dy\right. \\
+& \nonumber \left.x^{-\gamma-1}\int_{x^*_{i-1}}^{\max\{x,x^*_{i-1}\}} y^{\gamma}\frac{d}{dy}\left( \frac{H^{i-1}(y)}{y^b} \right) dy\right),
\end{align}
for all $x\in\mathbf{R}^+$. For brevity, denote
\[ \Delta_{i-1} = \frac{\kappa\gamma}{\kappa+\gamma}\int_{x^*_{i-1}}^\infty y^{-\kappa} \frac{d}{dy}\left( \frac{H^{i-1}(y)}{y^b} \right) dy. \]
Then the necessary condition \eqref{GBM: Necessary condition} can be expressed as
\begin{equation*}
{x^*_{i}}-b({x^*_{i}}-K)=-\Delta_{i-1} {x^*_{i}}^\beta.
\end{equation*}
By further simplification we obtain
\begin{equation*}
1-\frac{{x^*_{i}}}{x^*_1} = -\frac{\Delta_{i-1} {x^*_1}^{\beta-1}}{b-1}\left(\frac{{x^*_{i}}}{x^*_1}\right)^\beta. 
\end{equation*}
Since $\beta>1$ and $\Delta_i<0$, we observe that this necessary condition has a unique solution $x^*_{i}<x^*_1$. In particular, this condition implies that there is a unique coefficient $y^*_{i}\in(0,1)$ such that $x^*_{i}=y^*_{i} x^*_1$.

To close the section, we fix $N=5$ and compute numerically the optimal stopping thresholds $x^*_i$ for $i=1,\dots,5$. The parameter configuration reads as $r=0.05$, $\mu=0.008$, $\sigma=0.125$, $\lambda=0.1$ and $K=2$. In this case, the threshold $\hat{x}_\infty=\frac{\beta K}{\beta-1}\approx 2.593508$.

\begin{table}[h!]
\begin{center}
\begin{tabular}{|ccccc|}
\hline
$x^*_1$   &       $x^*_2$        &       $x^*_3$         &       $x^*_4$       &       $x^*_5$          \\
\hline
3.317653 & 3.079880 & 2.971528 & 2.738782 & 2.643230  \\
\hline
\end{tabular}
\vspace{0.1in}
\caption{The optimal exercise threshold $x^*_i$, $i=1,\dots,5$ under the parameter configuration $r=0.05$, $\mu=0.008$, $\sigma=0.125$, $\lambda=0.1$ and $K=2$.}
\end{center}
\end{table}

We observe from Table 1 that the thresholds $x^*_i$, $i=1,\dots,5$, form a decreasing sequence as a function of the number of stopping times left. This is in line with our general theory. Furthermore, Table 1 indicates that the thresholds $x^*_i$ converge to the threshold $\hat{x}_\infty$ as the number of stopping times left increases. This observation is also in line with our general theory.

\subsubsection*{Acknowledgements}
Jukka Lempa acknowledges financial support from the project "Energy markets: modelling, optimization and simulation (EMMOS)", funded by the Norwegian Research Council under grant 205328.

\end{document}